\newtheorem{theorem}{Theorem}[section]
\newtheorem{lemma}[theorem]{Lemma}
\newtheorem{corollary}[theorem]{Corollary}
\theoremstyle{definition}
\newtheorem{example}[theorem]{Example}
\theoremstyle{remark}
\newtheorem{remark}[theorem]{Remark}
\numberwithin{equation}{section}
\begin{document}

\title{The $n$-total graph of a commutative ring}


\author{Djamila AitElhadi}
\address{Department of Mathematics and Statistics, American University of Sharjah, P.O. Box 26666, Sharjah, United Arab Emirates}
\email{g00098182@alumni.aus.edu}


\author{Ayman Badawi}
\address{Department of Mathematics and Statistics, American University of Sharjah, P.O. Box 26666, Sharjah, United Arab Emirates}

\email{abadawi@aus.edu}

\subjclass[2020]{Primary 13A15; Secondary 13B99, 05C99}

\keywords{total graph, zerodivisor graph, generalized total graph, annihilator graph, zerodivisor, connected, diameter, girth}

\begin{abstract}
Let $R$ be a commutative ring with $1\not = 0$, $Z(R)$ be the set of all zero-divisors of $R$, and $n \geq 1$. This paper introduces the $n$-total graph of a commutative ring $R$. The $n$-total graph of a commutative ring $R$, denoted by $n-T(R)$, is an undirected simple graph with vertex set $R$, such that two vertices $x, y$ in $R$ are connected by an edge if $x^n + y^n$ in $Z(R)$. Note that if $n =1$, then the $1$-total graph of $R$ is the total graph of $R$ in the sense of Anderson-Badawi's paper on the total graph of a commutative ring. In this paper, we study some graph properties and theoretical ring structure.
\end{abstract}

\maketitle

\section{Introduction}
Graphs represent relations defined by taking ordered pairs from a set of vertices. Such ordered pairs are called edges. Graphs are very beneficial in physical, biological, and social applications. They are also used in computer scientific contexts like networks and data structure representations. In addition, Graphs are utilized in other mathematical fields, such as knot theory, where each type of knot can be associated with a graph.

Graphs over rings were first seriously researched during the late nineties of the last century. While the discussion was mainly about graphs over rings, there has been research on graphs over groups, semi-rings, modules over rings, etc. For a recent reference on graphs from rings, see the book \cite{5} by Anderson, Asir, Badawi, and Chelvam. In the paper \cite{17} published in 1988 titled 'Colouring of Commutative rings' by Istvan Beck, the idea of the colouring of commutative rings was presented. This idea established a connection between graph theory and commutative ring theory that is evident in many studies such as  \cite{4} and \cite{20}. The paper establishes the graph of $R$ by defining the relation $x$ is adjacent to $y$ if and only if $xy=0$ in a commutative $R$. The study is mainly concerned with characterizing and discussing finitely colourable rings and presenting certain conditions for which a ring $R$ is finitely colourable.  Anderson and Livingston introduce the zero divisor graph $\Gamma(R)$ of a commutative ring $R$  in \cite{11} by defining the vertices of the graph as $V=Z(R)-\{0\}$ and defining the edges by the relation $x \in V$ is adjacent to $y \in V$ if and only if $xy=0$. The zero divisor graph was investigated in several studies, such as \cite{6}, \cite{10}, and \cite{19}.  Another body of work that is well-studied in the field was the work of David F. Anderson and Ayman Badawi in \cite{7} on the total graph of a commutative ring $R$ defined by taking the vertices $V=\{x \in R\}$ and defining the edges by $x$ is adjacent to $y$ if and only if $x+y\in Z(R)$.

Let $R$ be a commutative ring with $1\not = 0$ and let $n \geq 1$ be an integer. In this paper, we define the $n$-total graph of $R$ by taking the vertices $V= R$ and defining the edges by $x \in V$ is adjacent to $y \in V$ if and only if $x^n+y^n\in Z(R)$. We denote the $n$-total graph of $R$ by $n-T(R)$. The literature on graphs from rings is rich. We cannot state them all; for example see \cite{1}--\cite{20}. A complete list of references up to 2021 that might interest the reader is \cite{5}.

We recall some definitions that are needed in this paper.   Let $G$ be a (simple) graph. We say that $G$ is {\it connected} if there is a path between any two distinct vertices of $G$. At the other extreme, we say that $G$ is {\it totally disconnected} if no two vertices of $G$ are adjacent (i.e., no vertices of $G$ are connected by one edge). For vertices $x$ and $y$ of $G$, the {\it distance} between $x$ and $y$, denoted by $d(x, y)$, is defined to be the length of the shortest path from $x$ to $y$ ($d(x, x) = 0$ and if there is no path between $x$ and $y$, then $d(x, y) = \infty$). The {\it diameter} of $G$ is $diam(G) = sup\{d(x, y) \mid$  $x$ and $y$ are vertices of $G\}$. The {\it girth} of $G$, denoted by $gr(G)$, is the length of a shortest cycle in $G$ ($gr(G) = \infty$ if $G$ contains no cycles). We denote the complete graph on $n$ vertices by $K_n$ and
the complete bipartite graph on $m$ and $n$ vertices by $K_{m, n}$ (we allow $m$ and $n$ to be infinite cardinals). We say that a
(induced) subgraph $G_1$ is a {\it component} of a graph $G$ if $G_1$ is connected and no vertex of $G_1$ is adjacent (in $G$) to any vertex not in $G_1$. If $G$ is the union of $m$ components, we say $G$ is a decomposition of $m$ components, and we write $G =\bigoplus_{i = 1}^m G_i$ and if $G_1 = \cdots = G_m$, then we write $G = \bigoplus_{i = 1}^m G_1$.

Throughout this paper, all rings are commutative with $1\not = 0$. Let $R$ be a commutative ring. Then $Z(R)$ denotes its set of all zero-divisor, $Nil(R)$ denotes its ideal of nilpotent elements, $Reg(R)$ denotes its set of non-zero-divisors (i.e., $Reg(R) = R \setminus Z(R)$), and $U(R)$ denotes its group of units. For $A \subseteq R$, let $A^* = A - \{0\}$. We say that $R$ is {\it reduced} if $Nil(R) = \{0\}$. As usual, $\mathbb{Z}, \mathbb{Z}_n$, and $F_q$ will denote the integers, integers modulo $n$, and the finite field with $q$ elements, respectively.

\section{Results and Analysis}

Let $n \geq2$. First, we will show that the $n$-total graph of a ring differs from the total graph of a ring, as in the Anderson-Badawi paper. We have the following examples.

\begin{example}
	The total graph of $\mathbb{Z}_{49}$ is a decomposition of four components, $K_7\oplus K_{7,7} \oplus K_{7,7} \oplus K_{7,7}$. Meanwhile, the $3$-total graph of $\mathbb{Z}_{49}$ is a decomposition of two components, $K_7\oplus K_{21,21}$.
\end{example}
\begin{example}
	The total graph of $\mathbb{Z}_7$ has $4$ components, while the $3-T(\mathbb{Z}_7)$ has $2$ components, and the $2-T(\mathbb{Z}_7)$ has $7$ components
\end{example}
\begin{example}
	The total graph of $\mathbb{Z}_{13}$ has $7$ components, the $6$-total graph of $\mathbb{Z}_{13}$ has $2$ components, and the $2$ total graph of $\mathbb{Z}_{13}$ has $4$ components
\end{example}
\subsection{The Case Where $Z(R)$ is an Ideal}\hfill\par
Let $R$ be a commutative ring such that $Z(R)$ is an ideal of $R$. Then  $Z(R)$ is a prime ideal of $R$; therefore, $R/Z(R)$ is an integral domain. Moreover, when $R$ is a finite commutative ring, $R/Z(R)$ becomes a field as $Z(R)=Nil(R)$ is the only maximal ideal of the ring. Trivially, the induced subgraph $n-T(Z(R))$ is a complete component of the $n-T(R)$. Thus, the main focus of this section will be on $n-T(Reg(R))$. We will generalize the results seen in \cite{7} for finite commutative rings where $Z(R)$ is an ideal of $R$. The following two lemmas are known; we omit their proofs.
\begin{lemma}\label{L1}
Let $F_q$ be a finite field. Then, $U(F_q) = F_q^*$  is a cyclic group. Furthermore, for each positive integer $m \geq 1$, if the equation $x^m = a$ has a solution in $F_q^*$ for some $a\in F_q^*$, then there are precisely $gcd(m, |F_q^*|)$ distinct solutions in $F_q^*$.
\end{lemma}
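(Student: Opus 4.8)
The plan is to treat the two assertions separately, establishing cyclicity first and then deducing the solution count as a routine consequence. For the cyclicity of $F_q^*$, I would argue by a counting argument governed by Euler's totient function $\phi$. Write $N = \abs{F_q^*} = q - 1$ and, for each divisor $d$ of $N$, let $\psi(d)$ denote the number of elements of $F_q^*$ of order exactly $d$. The key input from field theory is that the polynomial $x^d - 1$ has at most $d$ roots in $F_q$, so there are at most $d$ elements $x$ with $x^d = 1$. The first step is to show that $\psi(d)$ is either $0$ or $\phi(d)$: if some element $a$ has order $d$, then $\langle a \rangle$ is a cyclic subgroup of order $d$ all of whose elements satisfy $x^d = 1$; by the polynomial bound these exhaust the solutions of $x^d = 1$, and among them exactly $\phi(d)$ have order $d$. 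The second step combines the identity $\sum_{d \mid N} \phi(d) = N$ with the partition $\sum_{d \mid N} \psi(d) = N$ and the inequality $\psi(d) \le \phi(d)$ to force $\psi(d) = \phi(d)$ for every $d \mid N$. In particular $\psi(N) = \phi(N) \ge 1$, so an element of order $N$ exists and $F_q^*$ is cyclic.

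For the second assertion, I would fix a generator $g$ of $F_q^*$ and study the endomorphism $x \mapsto x^m$, whose kernel is $K = \{x \in F_q^* : x^m = 1\}$. Writing $x = g^k$, the condition $x^m = 1$ becomes $N \mid km$, which holds precisely when $(N/\gcd(m,N)) \mid k$; counting the residues $k$ modulo $N$ then gives $\abs{K} = \gcd(m, N)$. Finally, if $x^m = a$ admits one solution $x_0$, the complete solution set is the coset $x_0 K$, whose cardinality equals $\abs{K} = \gcd(m, \abs{F_q^*})$, as claimed.

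I expect the cyclicity step to be the main obstacle, since the solution-counting reduces to elementary coset and divisibility bookkeeping once a generator is in hand. Within cyclicity, the delicate point is the dichotomy $\psi(d) \in \{0, \phi(d)\}$, which is exactly where the field hypothesis enters through the bound on the number of roots of $x^d - 1$. Over a general finite abelian group this dichotomy can fail, so the argument genuinely uses that $F_q$ is a field rather than merely that $F_q^*$ is a finite abelian group.
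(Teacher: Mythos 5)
Your proof is correct and complete: the totient-counting argument for cyclicity (using that $x^d-1$ has at most $d$ roots in a field) and the kernel/coset count $\lvert K\rvert = \gcd(m, q-1)$ for the number of $m$-th roots are both sound. The paper explicitly omits the proof of this lemma as a known result, so there is no in-paper argument to compare against; what you have written is the standard proof and fills the gap correctly.
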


\begin{lemma}\label{L2}
	Let $F_q$ be a finite field, $a \in F_q^*$, and $m\geq 1 $ be an integer. Then $x^m=a$ has a solution in $F_q$ if and only if $a \in S_m = \{b^m \mid b \in F_q^*\}$. Furthermore, $S_m$ is a cyclic subgroup of $F_q^*$ of order $\frac{|F_q^*|}{gcd(m, |F_q^*|)}$.
\end{lemma}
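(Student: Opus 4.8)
The plan is to treat the first equivalence as essentially definitional and to obtain the structural statement by realizing $S_m$ as the image of the $m$-th power endomorphism of the cyclic group $F_q^*$. Since $a \in F_q^*$ is nonzero and $0^m = 0 \neq a$, any solution of $x^m = a$ must lie in $F_q^*$; hence $x^m = a$ is solvable in $F_q$ precisely when $a = b^m$ for some $b \in F_q^*$, i.e. when $a \in S_m$. This settles the first assertion with no real computation.

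For the second assertion, I would consider the map $\phi \colon F_q^* \to F_q^*$ defined by $\phi(b) = b^m$. Because $F_q^*$ is abelian, $\phi$ is a group homomorphism, and by definition its image is exactly $S_m$. The first step is to conclude that $S_m$ is cyclic: by Lemma~\ref{L1} the group $F_q^*$ is cyclic, and the homomorphic image of a cyclic group is cyclic. Concretely, fixing a generator $g$ of $F_q^*$ and writing $N := \abs{F_q^*}$, one has $S_m = \langle g^m \rangle$, so $S_m$ is the cyclic subgroup generated by $g^m$.

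The remaining work is to compute $\abs{S_m}$, which is the one place any argument is needed. I would do this in either of two equivalent ways. The direct route computes the order of $g^m$ in a cyclic group of order $N$: the order of $g^m$ is the least positive integer $k$ with $N \mid mk$, which is $k = N/\gcd(m,N)$, giving $\abs{S_m} = N/\gcd(m,N) = \abs{F_q^*}/\gcd(m,\abs{F_q^*})$. Alternatively, I would apply the first isomorphism theorem to $\phi$, so that $\abs{S_m} = \abs{F_q^*}/\abs{\ker\phi}$, and then identify $\ker\phi = \{b \in F_q^* : b^m = 1\}$ as the set of $m$-th roots of unity in $F_q^*$, whose cardinality in a cyclic group of order $N$ is $\gcd(m,N)$.

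I do not anticipate a genuine obstacle here; the only substantive point is the elementary counting fact that $g^m$ has order $N/\gcd(m,N)$ (equivalently, that a cyclic group of order $N$ has exactly $\gcd(m,N)$ elements satisfying $b^m = 1$), which is standard and can either be proved in one line from the divisibility condition $N \mid mk$ or simply cited. Everything else follows formally from Lemma~\ref{L1}.
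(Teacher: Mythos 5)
Your proof is correct. Note that the paper deliberately omits a proof of this lemma, stating only that it (together with Lemma~\ref{L1}) is known, so there is no argument in the paper to compare against; your write-up is exactly the standard argument one would supply: the reduction of solvability in $F_q$ to solvability in $F_q^*$ is immediate since $a \neq 0$, and realizing $S_m$ as the image of the $m$-th power homomorphism on the cyclic group $F_q^*$ (equivalently, as $\langle g^m\rangle$ for a generator $g$) gives both the cyclicity and the order count $|F_q^*|/\gcd(m,|F_q^*|)$, consistent with the $\gcd(m,|F_q^*|)$ solution count asserted in Lemma~\ref{L1}.
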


The following theorem generalizes \cite[Theorem 2.2 (1)]{7} for finite commutative rings.
\begin{theorem}\label{T1}
	Let $R$ be a finite commutative ring such that $Z(R)$ is an ideal of $R$ and $2 \in Z(R)$. Let $n \geq 1$ be an integer, $|Z(R)|=\alpha$, $|R/Z(R)|=\beta$, $\gamma = \alpha\cdot gcd(n, \beta - 1)$, and $d = \frac{\beta - 1}{gcd(n, \beta - 1)}$. Then the $n-T(R)= K_\alpha\oplus\bigoplus_{i =1}^{d}K_{\gamma}$. Note that $|R| = \alpha\beta$.
\end{theorem}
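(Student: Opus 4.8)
The plan is to push the adjacency relation down to the residue field $R/Z(R)$, which, as noted above, is a finite field $F_\beta$ because $R$ is finite and $Z(R)$ is a (necessarily maximal) ideal. Writing $\bar{x}$ for the image of $x \in R$, I would first observe that $x^n + y^n \in Z(R)$ if and only if $\bar{x}^n + \bar{y}^n = 0$ in $F_\beta$, so that adjacency in $n-T(R)$ is determined entirely by the images of the vertices in $F_\beta$. The hypothesis $2 \in Z(R)$ says exactly that $F_\beta$ has characteristic $2$, whence $-\bar{y}^n = \bar{y}^n$ and the adjacency condition collapses to $\bar{x}^n = \bar{y}^n$. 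This is the crucial simplification: it makes adjacency a reflexive and symmetric condition on fibers of the $n$th-power map, which is what forces each component to be a complete graph rather than a bipartite one.

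Next I would isolate the $Z(R)$-component. If $x, y \in Z(R)$ then $\bar{x} = \bar{y} = 0$, so $\bar{x}^n = \bar{y}^n = 0$ and $x, y$ are adjacent; hence $Z(R)$ induces a clique on $\alpha = |Z(R)|$ vertices. If $x \in Z(R)$ and $y \in Reg(R)$, then $\bar{x}^n = 0 \neq \bar{y}^n$, since a nonzero element of a field has nonzero $n$th power (here $n \geq 1$); thus $x$ and $y$ are non-adjacent. Therefore $n-T(Z(R)) = K_\alpha$ is a full component, and it remains to analyze $n-T(Reg(R))$.

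For the regular part I would partition $Reg(R)$ according to the value $\bar{x}^n \in F_\beta^*$. By Lemma \ref{L2} the set of attained values $S_n = \{t^n \mid t \in F_\beta^*\}$ is a subgroup of order $d = \frac{\beta-1}{gcd(n,\beta-1)}$, so there are exactly $d$ such fibers, and by Lemma \ref{L1} each attained value is hit by precisely $gcd(n,\beta-1)$ elements of $F_\beta^*$. Since every fiber of the reduction map $R \to F_\beta$ is an additive coset of $Z(R)$ and hence has size $\alpha$, each of the $d$ classes contains $\alpha\cdot gcd(n,\beta-1) = \gamma$ elements of $Reg(R)$. Two regular vertices are adjacent precisely when they lie in the same class (equal $n$th powers) and never otherwise, so each class is a complete component $K_\gamma$ and distinct classes share no edges.

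Combining the pieces yields $n-T(R) = K_\alpha \oplus \bigoplus_{i=1}^{d} K_\gamma$, and a vertex count confirms consistency: $\alpha + d\gamma = \alpha + \alpha\,d\cdot gcd(n,\beta-1) = \alpha + \alpha(\beta-1) = \alpha\beta = |R|$. The only genuinely delicate points are the reduction of adjacency to the residue field and the role of characteristic $2$; without $2 \in Z(R)$ the condition $\bar{x}^n = -\bar{y}^n$ need not be reflexive, the in-class structure would become complete bipartite instead of complete, and so that hypothesis is precisely where the shape of the components is decided.
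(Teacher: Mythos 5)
Your proof is correct and follows essentially the same route as the paper: reduce adjacency to the residue field $F_\beta = R/Z(R)$, use $2 \in Z(R)$ (characteristic $2$) to turn the condition $\bar{x}^n = -\bar{y}^n$ into $\bar{x}^n = \bar{y}^n$, and then count the fibers of the $n$th-power map via Lemmas \ref{L1} and \ref{L2} to get $d$ cliques of size $\gamma$ plus the clique $K_\alpha$ on $Z(R)$. If anything, your write-up is slightly more careful than the paper's in explicitly verifying non-adjacency between distinct classes (so that each $K_\gamma$ is a genuine component and not merely a clique), but the underlying argument is the same.
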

\begin{proof}
	Let $n \geq 1$ be an integer. Assume that $2 \in Z(R)$, and let $x \in Reg(R)$ (i.e., $x \in U(R)$). Since $2 \in Z(R)$, $(x + z_1)^n + (x + z_2)^n = 2x^n + z_3\in Z(R)$ for some $z_3\in Z(R)$ for all $z_1, z_2 \in Z(R)$. Hence, each coset $x + Z(R)$ elements form a $K_\alpha$ subgraph of the $n-T(R)$. Since $F = |R/Z(R)|$ is a finite field, for each $a \in S_n(F) = \{b^n \mid b \in F^*\}$, the set $C_n(a) = \{w \in F^* \mid w^n = a \}$ has exactly $gcd(n, \beta - 1)$ cosets of $Z(R)$ by Lemma \ref{L1} and Lemma \ref{L2}. Let $D_a = \cup_{v \in C_n(a)} v$. Then $D_a$ is a subset of $R$, $|D_a| = \gamma$, and every two vertices in $D_a$ are adjacent since $2 \in Z(R)$. Hence, $D_a$ is a $K_\gamma$ component of the $n-T(R)$. Since $|S_n(F)| = d$ by Lemma \ref{L2} and $Z(R)$ forms a $K_\alpha$ component of the $n-T(R)$, we conclude that the $n-T(R)= K_\alpha \oplus \bigoplus_{i = 1}^dK_{\gamma}$.

\end{proof}

\begin{example}
	Let $p=2$, $m=2^i$, $c=2^{i-1}$, and let $n\geq 1$ be an integer. The $n-T(\mathbb{Z}_m) = K_c  \oplus K_c$ where the $n-T(Reg(\mathbb{Z}_m)) = K_c$.
\end{example}
\begin{example}
	Let $R=\mathbb{Z}_8$. We have $2\in Z(R)$, $|Z(R)|=4$ and $|R/Z(R)|=2$. The $2$-total graph of $\mathbb{Z}_8$ is $K_{4}\oplus K_4$. Figure 1 shows the $n$-total graph of $\mathbb{Z}_8$ for every integer $n \geq 1$.
\end{example}
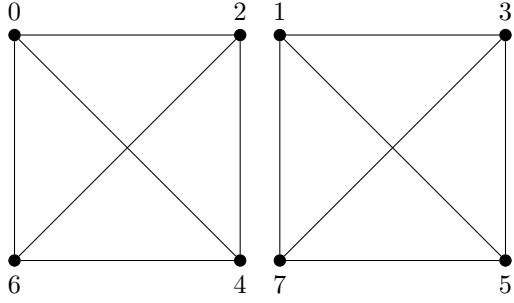
\begin{figure}[!h]
	
	\centering
	\begin{tikzpicture}
		\tikzset{enclosed/.style={draw, circle, inner sep=0pt, minimum size=.15cm, fill=black}}
		
		\node[enclosed, label={above, yshift=0cm: 0}] (0) at (0,3) {};
		\node[enclosed, label={above, xshift=0cm: 2}] (2) at (3,3) {};
		\node[enclosed, label={below, yshift=0cm: 4}] (4) at (3,0) {};
		\node[enclosed, label={below, xshift=0cm: 6}] (6) at (0,0) {};
		
		\draw (0) -- (2) node[midway, above] (edge1) {} ;
		\draw (2) -- (4) node[midway, left] (edge2) {};
		\draw (4) -- (6) node[midway, right] (edge3) {};
		\draw (6) -- (0) node[midway, below] (edge4){};
		\draw (6) -- (2) node[midway, below] (edge4){};
		\draw (0) -- (4) node[midway, above] (edge1) {} ;
		
	\end{tikzpicture}
	\begin{tikzpicture}
		\tikzset{enclosed/.style={draw, circle, inner sep=0pt, minimum size=.15cm, fill=black}}
		
		\node[enclosed, label={above, yshift=0cm: 1}] (0) at (0,3) {};
		\node[enclosed, label={above, xshift=0cm: 3}] (2) at (3,3) {};
		\node[enclosed, label={below, yshift=0cm: 5}] (4) at (3,0) {};
		\node[enclosed, label={below, xshift=0cm: 7}] (6) at (0,0) {};
		
		\draw (0) -- (2) node[midway, above] (edge1) {} ;
		\draw (2) -- (4) node[midway, left] (edge2) {};
		\draw (4) -- (6) node[midway, right] (edge3) {};
		\draw (6) -- (0) node[midway, below] (edge4){};
		\draw (6) -- (2) node[midway, below] (edge4){};
		\draw (0) -- (4) node[midway, above] (edge1) {} ;
			\end{tikzpicture}
	
	\caption{$n-T(\mathbb{Z}_8)$ for every integer $n \geq 1$}
\end{figure}

The following theorem generalizes \cite[Theorem 2.2 (2)]{7} for finite commutative rings.
\begin{theorem}\label{T2}
	Let $n \geq 1$ be an integer and $R$ be a finite commutative ring such that $Z(R)$ is an ideal of $R$ and $2\not \in Z(R)$. Let $|Z(R)|=\alpha$, $|R/Z(R)|=\beta$,  $\gamma =\alpha\cdot gcd(n, \beta - 1)$, and $d=\frac{\beta-1}{gcd(n,\beta-1)}$. Then
	\begin{enumerate}
		\item   The $n-T(Reg(R))$ is totally disconnected if and only if  $d$ is odd.
		\item If $d$ is even, then the $n-T(Reg(R))= \bigoplus_{i = 1}^{d/2}K_{\gamma, \gamma}$, and hence the $n-T(R) = K_\alpha \oplus \bigoplus_{i = 1}^{d/2}K_{\gamma, \gamma}$.
	\end{enumerate}
\end{theorem}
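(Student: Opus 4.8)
The plan is to push everything down to the quotient field $F = R/Z(R)$, which is legitimate since $Z(R)$ is a (maximal) ideal of the finite ring $R$. Writing $\overline{x}$ for the image of $x$ under the quotient map $\phi\colon R \to F$, I would translate the adjacency condition $x^n + y^n \in Z(R)$ for $x,y \in Reg(R) = U(R)$ into $\overline{x}^{\,n} + \overline{y}^{\,n} = 0$ in $F$, i.e. $\overline{y}^{\,n} = -\overline{x}^{\,n}$. Because $\phi$ maps $U(R)$ onto $F^*$, the $n$-th powers $\overline{x}^{\,n}$ range exactly over $S_n(F) = \{b^n \mid b \in F^*\}$, and for each $a \in S_n(F)$ the fibre $D_a = \{x \in Reg(R) \mid \overline{x}^{\,n} = a\}$ has size $\gamma = \alpha\cdot\gcd(n,\beta-1)$ by Lemma \ref{L1}, exactly as in Theorem \ref{T1}. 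The decisive new feature, forced by $2 \notin Z(R)$, is that two regular elements with the same image are never adjacent: $\overline{x}^{\,n} + \overline{x'}^{\,n} = 2a \neq 0$ since $2$ is a unit. Thus each $D_a$ is an independent set, and a vertex of $D_a$ is adjacent precisely to the vertices of $D_{-a}$ (when $-a \in S_n(F)$).

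The heart of the argument is a parity computation: $-1 \in S_n(F)$ if and only if $d$ is even. Here I would use that $2 \notin Z(R)$ forces $\mathrm{char}(F) \neq 2$, so $\beta$ is odd and $F^*$ is cyclic of even order $\beta - 1$. If $\omega$ generates $F^*$, then $-1 = \omega^{(\beta-1)/2}$, while $S_n(F) = \langle \omega^{\gcd(n,\beta-1)} \rangle$ is cyclic of order $d$ by Lemma \ref{L2} and consists of the powers $\omega^k$ with $\gcd(n,\beta-1) \mid k$. Hence $-1 \in S_n(F)$ iff $\gcd(n,\beta-1) \mid (\beta-1)/2$, which, after writing $\beta-1 = \gcd(n,\beta-1)\cdot d$, is equivalent to $d$ being even.

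For part (1), suppose $d$ is odd, so $-1 \notin S_n(F)$. Then for every $a \in S_n(F)$ one has $-a \notin S_n(F)$ (otherwise $-1 = (-a)\cdot a^{-1} \in S_n(F)$), so no regular element has $n$-th power equal to $-a$; consequently no vertex of $n-T(Reg(R))$ has a neighbour and the graph is totally disconnected. Conversely, if $n-T(Reg(R))$ is totally disconnected, then in particular the (nonempty) set of vertices with $\overline{x}^{\,n} = 1$ has no neighbour, forcing $-1 \notin S_n(F)$, i.e. $d$ odd.

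For part (2), suppose $d$ is even, so $-1 \in S_n(F)$ and hence $-a \in S_n(F)$ for every $a$. Since $a = -a$ would give $2a = 0$, impossible in $F^*$, the map $a \mapsto -a$ is a fixed-point-free involution on the $d$-element set $S_n(F)$, partitioning it into $d/2$ pairs $\{a,-a\}$. For each pair, the subgraph induced on $D_a \cup D_{-a}$ is complete bipartite with parts $D_a, D_{-a}$ of equal size $\gamma$: every cross edge is present, and by the independence noted above no edge lies inside a part; moreover it is a full component, since a vertex of $D_a$ can only be adjacent to vertices of $D_{-a}$. This gives $n-T(Reg(R)) = \bigoplus_{i=1}^{d/2} K_{\gamma,\gamma}$, and adjoining the separate $K_\alpha$ on $Z(R)$ (no regular--zerodivisor edge exists, since $\overline{z^n + x^n} = \overline{x}^{\,n} \neq 0$) yields $n-T(R) = K_\alpha \oplus \bigoplus_{i=1}^{d/2} K_{\gamma,\gamma}$. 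I expect the only genuine obstacle to be the parity lemma of the second paragraph; once $-1 \in S_n(F) \iff d$ even is established, the remainder is bookkeeping with the involution $a \mapsto -a$ and the independence of each $D_a$.
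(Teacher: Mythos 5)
Your proof is correct and follows essentially the same route as the paper: reduce to $F = R/Z(R)$, use Lemmas \ref{L1} and \ref{L2} to size the fibres $D_a$ as $\gamma$, and observe that adjacency among regular elements is governed by whether $-1 \in S_n(F)$, which holds precisely when $d$ is even. The only cosmetic difference is that you establish this parity criterion once via a generator of $F^*$ and apply it to both directions of (1), whereas the paper handles the ``$d$ odd'' direction by raising $x^n \equiv -y^n$ to the $d$-th power; both arguments are sound.
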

\begin{proof}

	Let $n \geq 1$ be an integer. Note that $F = R/Z(R)$ is a finite field and $a^{\beta - 1} = 1 + Z(R)$ for every $a \in F^*$.
		
	(1). Suppose the $n-T(Reg(R))$ is totally disconnected. Assume that $d$ is even. Then $S_n(F) = \{b^n \mid b \in F^*\}$ is the unique cyclic subgroup of $F^*$ of order $d$ by Lemma \ref{L2}. Since $d$ is even, $-1 + Z(R) \in S_n(F)$. Since $2 \not \in Z(R)$, $1 + Z(R) \not = -1 + Z(R)$.  Hence there are $x = a + Z(R), y = b + Z(R) \in F^*$ for some $a, b \in Reg(R)$ such that $x^n + y^n = (a^n + b^n) + Z(R) = Z(R)$ in $F$. Thus $a^n + b^n \in Z(R)$; a contradiction since the $n-T(R)$ is totally disconnected.
				
		For the converse, suppose $d$ is odd. Note that $dn = k\cdot (\beta - 1)$ for some integer $k\geq 1$. Assume that the $n-T(Reg(R))$ is not totally disconnected. Then, there exist $x,y \in Reg(R)$ such that $x^n+y^n \in Z(R)$. Thus $x^n + Z(R) = - y^n + Z(R)$ in $F$. Hence $(x^n)^d + Z(R) = (-y^n)^d + Z(R)$ in $F$. Thus $x^{nd} + Z(R) = (-1^d\cdot y^{nd}) + Z(R)$ in $F$. Since $nd = k(\beta - 1)$ and $d$ is odd, we have $1 + Z(R) = -1 + Z(R)$ in $F$. Hence $2 \in Z(R)$, a contradiction. Thus the $n-T(R)$ is totally disconnected.
			
	(2). Let $a \in Reg(R)$. Since $2 \not \in Z(R)$, $a^n+a^n = 2a^n \not \in Z(R)$. Therefore, no two vertices in $a+Z(R)$ are connected in the $n-T(Reg(R))$. Since $S_n(F) = \{b^n \mid b \in F^*\}$ is the unique cyclic subgroup of $F^*$ of order $d$ by Lemma \ref{L2} and $d$ is even, we conclude that $-a \in S_n(F)$ for every $a\in S_n(F)$.  Since $F = |R/Z(R)|$ is a finite field, for each $a \in S_n(F) = \{b^n \mid b \in F^*\}$, the set $C_n(a) = \{w \in F^* \mid w^n = a \}$ has exactly $gcd(n, \beta - 1)$ cosets of $Z(R)$ by Lemma \ref{L1} and Lemma \ref{L2}. Let $A = \cup_{v \in C_n(a)} v $ and $B = \cup_{w \in C_n(-a)} w$. Then $A \cup B$ is a subset of $Reg(R)$, and $|A| = |B| = \gamma$. Note that every two vertices of $A$ are not adjacent, and every two vertices of $B$ are not adjacent. However, every vertex in $A$ is adjacent to every vertex in $B$.  Thus $A \cup B$ elements form a $K_{\gamma, \gamma}$ component of the $n-T(Reg(R))$. Thus the $n-T(Reg(R))= \bigoplus_{i = 1}^{d/2}K_{\gamma, \gamma}$. Since $Z(R)$ elements form a $K_\alpha$ component of the $n-T(R)$. We conclude that the $n-T(R) =K_\alpha \oplus \bigoplus_{i = 1}^{d/2}K_{\gamma, \gamma}$
\end{proof}

\begin{example} Let $n \geq 1$ be an integer, $p$ be an odd prime, $m = p^i$, $R = \mathbb{Z}_m$,  $\alpha = p^{i-1} = |Z(\mathbb{Z}_m)|$, $\beta = |R/Z(R)|$, $\gamma = gcd(n, \beta - 1)\alpha$, and $d = (\beta - 1)/gcd(n, \beta - 1) = (p-1)/gcd(n,p-1)$.
	\begin{enumerate}
		\item  The $n-T(Reg(\mathbb{Z}_m))$ is totally disconnected if and only if  $d$ is odd.
		\item If $d$ is even, then the $n-T(Reg(R))= \bigoplus_{i = 1}^{d/2}K_{\gamma, \gamma}$, and hence the $n-T(R) = K_\alpha \oplus \bigoplus_{i = 1}^{d/2}K_{\gamma, \gamma}$.
		\item Let $R = \mathbb{Z}_{169}$ and $n = 3$. Let $\alpha, \beta, \gamma$, and $d$ as in Theorem \ref{T2}. Then $\alpha = \beta = 13$, $\gamma = 39$, and $d = 4$. Then the $3-T(R) = K_{39, 39} \oplus K_{39, 39} \oplus K_{13}$.
	\end{enumerate}
	
\end{example}

\begin{example}
Let  $R = \mathbb{Z}_9$ and $n = 2$. We have $\alpha = |Z(R)|= 3$, $\beta = |R/Z(R)| = 3$, $\gamma = gcd(n, \beta - 1)\alpha = 3gcd(2, 2) = 6$, and $d = (\beta - 1)/gcd(n, \beta - 1) = 2/2 = 1$.  Then the  $2-T(Reg(R))$ is a totally disconnected graph with $6$ vertices. However, The total graph of $Reg(R) = 1-T(Reg(R)) = K_{3,3}$. Thus the $2-T(R) = K_{3,3} \oplus K_3$
\end{example}

\begin{corollary}
	Let $n \geq 1$ be an integer and $R$ be a finite commutative ring such that $2\not\in Z(R)$. Let $|Z(R)|=\alpha$, $|R/Z(R)|=\beta$, and $\gamma = \alpha \cdot gcd(n,\beta-1)=(\beta-1)/2$. Then the $n-T(Reg(R))$ is connected if and only if $gcd(n,\beta-1)=(\beta-1)/2$. Furthermore, if the $n-T(Reg(R))$ is connected, then the $n-T(R) = K_\alpha \oplus K_{\gamma, \gamma}$, where the $n-T(Reg(R)) = K_{\gamma, \gamma}$ is connected.
\end{corollary}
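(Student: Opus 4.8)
The plan is to read this off directly from Theorem~\ref{T2} by converting the connectivity of $n-T(Reg(R))$ into a numerical condition on $d = \frac{\beta-1}{gcd(n,\beta-1)}$. First I would recall the dichotomy supplied by Theorem~\ref{T2}: if $d$ is odd then $n-T(Reg(R))$ is totally disconnected, whereas if $d$ is even then $n-T(Reg(R)) = \bigoplus_{i=1}^{d/2} K_{\gamma,\gamma}$. Since connectivity means having a single component, the task reduces to deciding when each regime produces exactly one component.

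A preliminary observation handles the odd case. Because $2 \not\in Z(R)$, the image $2 + Z(R)$ is nonzero in the field $F = R/Z(R)$, so $F$ has characteristic different from $2$; hence $\beta = |F|$ is a power of an odd prime and $\beta - 1 \geq 2$ is even. Consequently $|Reg(R)| = \alpha(\beta - 1) \geq 2$, so when $n-T(Reg(R))$ is totally disconnected it has at least two pairwise nonadjacent vertices and is therefore not connected. This eliminates the odd-$d$ regime entirely.

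It remains to treat the even case. Here $n-T(Reg(R)) = \bigoplus_{i=1}^{d/2} K_{\gamma,\gamma}$ is a disjoint union of $d/2$ connected pieces, and such a union is connected if and only if $d/2 = 1$, i.e. $d = 2$. Unwinding, $d = 2$ is equivalent to $gcd(n,\beta-1) = (\beta-1)/2$, which is the asserted criterion. For the final statement, assuming connectivity forces $d = 2$, so Theorem~\ref{T2}(2) gives $n-T(Reg(R)) = \bigoplus_{i=1}^{1} K_{\gamma,\gamma} = K_{\gamma,\gamma}$; adjoining the complete component $K_\alpha$ arising from the ideal $Z(R)$ then yields $n-T(R) = K_\alpha \oplus K_{\gamma,\gamma}$.

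The only delicate point, and the step I would be most careful about, is the boundary between the two regimes of Theorem~\ref{T2}: I must rule out the totally disconnected case as a possible source of a connected graph, which is exactly why establishing $|Reg(R)| \geq 2$ (equivalently, $\beta$ odd) is the essential small input. Once that is in place, the remainder is the elementary fact that a disjoint union of complete bipartite graphs is connected precisely when there is a single summand, together with the algebraic identity $d = 2 \iff gcd(n,\beta-1) = (\beta-1)/2$.
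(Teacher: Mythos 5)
Your proof is correct and follows essentially the same route as the paper: both directions are read off from Theorem \ref{T2} via the equivalence $d=2 \iff \gcd(n,\beta-1)=(\beta-1)/2$, with the forward implication giving $n-T(Reg(R))=K_{\gamma,\gamma}$ and hence $n-T(R)=K_\alpha\oplus K_{\gamma,\gamma}$. You are in fact slightly more careful than the paper's own proof, which in the converse direction invokes the decomposition of Theorem \ref{T2}(2) without first ruling out the odd-$d$ (totally disconnected) regime; your observation that $2\notin Z(R)$ forces $\beta-1\geq 2$, hence $|Reg(R)|\geq 2$, so a totally disconnected $n-T(Reg(R))$ cannot be connected, closes that small gap.
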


\begin{proof}
	Let $d = (\beta - 1)/gcd(n, \beta - 1)$.
Suppose that the $gcd(n,\beta-1)=(\beta-1)/2$. Then $d = 2$. Hence by Theorem \ref{T2}, the $n-T(Reg(R))= K_{\gamma, \gamma}$ is connected. Conversely, suppose the $n-T(Reg(R))$ is connected. Since the $n-T(Reg(R))=\bigoplus_{i = 1}^{d/2} K_{\gamma, \gamma}$ by Theorem \ref{T2}, we conclude that $d=2$. Thus the $ gcd(n,\beta-1)=(\beta-1)/2$.
\end{proof}

\begin{example}
Let $n \geq 1$ be an integer, $p$ be an odd prime, $m = p^i$, $R = \mathbb{Z}_m$,  $\alpha = p^{i-1} = |Z(\mathbb{Z}_m)|$, $\beta = |R/Z(R)| = p$, $\gamma = gcd(n, \beta - 1)\alpha = gcd(n, p-1)\alpha$, and $d = (\beta - 1)/gcd(n, \beta - 1) = (p-1)/gcd(n,p-1)$.  Then the $n-T(Reg(R))$ is connected if and only if $gcd(n,p-1)=(p-1)/2$. Furthermore, if the $n-T(Reg (R))$ is connected, then the $n-T(R) = K_\alpha  \oplus K_{\gamma, \gamma}$, where the $n-T(Reg(R)) = K_{\gamma, \gamma}$ is connected.
\end{example}

\begin{example}
The $2-T(Reg(\mathbb{Z}_{25}))$ is connected since $gcd(2,5-1)=2$. This is unlike $1-T(Reg(\mathbb{Z}_{25}))$, which is not connected and has two complete bipartite components.
\end{example}
The following theorem generalizes \cite[Theorem 2.5]{7} for finite commutative rings.
\begin{theorem}\label{T3}
Let $n \geq 1$ be an integer and $R$ be a finite commutative ring such that $Z(R)$ is an ideal of $R$. Then
\begin{enumerate}
	\item $diam(n-T(Reg(R)))=0,1,2,$ or $\infty$.
	\item $gr(n-T(Reg(R)))= 3,4,$ or $\infty$.
\end{enumerate}
\end{theorem}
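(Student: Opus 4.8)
The plan is to read off both statements directly from the complete structural description of $n-T(Reg(R))$ already obtained in Theorems \ref{T1} and \ref{T2}, since in every case this graph is a disjoint union of highly symmetric pieces whose diameter and girth are elementary. First I would recall the resulting trichotomy: if $2 \in Z(R)$, then by Theorem \ref{T1} we have $n-T(Reg(R)) = \bigoplus_{i=1}^{d} K_\gamma$; while if $2 \notin Z(R)$, then by Theorem \ref{T2} it is either totally disconnected (when $d$ is odd) or $\bigoplus_{i=1}^{d/2} K_{\gamma,\gamma}$ (when $d$ is even). In particular, every connected component of $n-T(Reg(R))$ is either a complete graph $K_\gamma$, a complete bipartite graph $K_{\gamma,\gamma}$, or an isolated vertex.

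For part (1) I would split on the number of components. If $n-T(Reg(R))$ has at least two components --- equivalently $d \geq 2$ in the complete-graph case, $d/2 \geq 2$ in the bipartite case, or the totally disconnected case with more than one vertex --- then there are vertices $x, y$ lying in distinct components, so $d(x,y) = \infty$ and hence $diam(n-T(Reg(R))) = \infty$. Otherwise the graph is a single component, and I would examine the possibilities: a single isolated vertex gives $diam = 0$; a single $K_\gamma$ gives $diam = 0$ when $\gamma = 1$ and $diam = 1$ when $\gamma \geq 2$, since any two distinct vertices of a complete graph are adjacent; and a single $K_{\gamma,\gamma}$ gives $diam = 1$ when $\gamma = 1$ (as $K_{1,1} = K_2$) and $diam = 2$ when $\gamma \geq 2$, since two vertices on the same side of the bipartition are non-adjacent but are joined through any vertex on the opposite side. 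Collecting the cases yields $diam(n-T(Reg(R))) \in \{0,1,2,\infty\}$.

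For part (2) I would use the fact that every cycle lies inside a single connected component, so the girth of $n-T(Reg(R))$ equals the minimum of the girths of its components. A complete graph $K_\gamma$ has girth $3$ when $\gamma \geq 3$ and contains no cycle otherwise; a complete bipartite graph $K_{\gamma,\gamma}$ is bipartite, hence has no odd cycle, and has girth $4$ when $\gamma \geq 2$ and no cycle when $\gamma = 1$; an isolated vertex contributes no cycle. Therefore, if $n-T(Reg(R))$ contains any cycle at all, its shortest cycle has length $3$ (possible only in the complete-graph case) or $4$ (possible only in the bipartite case), and if there is no cycle then $gr = \infty$. This gives $gr(n-T(Reg(R))) \in \{3,4,\infty\}$.

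Since the decomposition of the graph is the substantive content, already carried out in Theorems \ref{T1} and \ref{T2}, there is no genuine obstacle remaining. The only care needed is in the degenerate small cases, namely $\gamma = 1$ or a single vertex, where one must distinguish $diam = 0$ from $diam = 1$, and where one must check that $K_{1,1}$ and $K_\gamma$ with $\gamma \leq 2$ contain no cycle, so that these pieces do not force a girth below $3$.
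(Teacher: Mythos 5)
Your proposal is correct and follows essentially the same route as the paper: both read the result off the decomposition of $n-T(Reg(R))$ into complete and complete bipartite components furnished by Theorems \ref{T1} and \ref{T2}. Your version is somewhat more careful than the paper's own terse argument in handling the disconnected case (where the diameter is $\infty$) and the degenerate small components ($\gamma = 1$, isolated vertices), but the substance is identical.
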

\begin{proof}
\begin{enumerate}
	\item Suppose the $n-T(Reg(R))$ is connected. Then the $n-T(Reg(R))$ is a singleton, a complete graph, or a complete bipartite graph by Theorem \ref{T1} and Theorem \ref{T2}.  Therefore, $diam(n-T(R))\leq 2$.
	\item Suppose the  $n-T(Reg(R))$ has a cycle. Then the $n-T(Reg(R))$ is a complete graph or a disjoint union of complete bipartite graphs by Theorem \ref{T1} and Theorem \ref{T2}. Hence, if a component of the  $n-T(Reg(R))$ has more than two vertices, then $gr(n-T(Reg(R)) = 3$ or $4$. If each component of the $n-T(Reg(R))$ has two vertices or fewer, then  $gr(n-T(Reg(R)) = \infty$.
\end{enumerate}
\end{proof}
\begin{example}
	Let $n \geq 1$ be an integer. Then
\begin{enumerate}
	\item $diam(2-T(Reg(\mathbb{Z}_2)))=0$ and $gr(n-T(Reg(\mathbb{Z}_2)))=\infty$ as $Reg(\mathbb{Z}_2)=\{1\}$ is a singleton.
	\item $diam(2-T(Reg(\mathbb{Z}_8)))= 1$,  $gr(n-T(Reg(\mathbb{Z}_8)))= 3 $, and  the $gr(2-T(Reg(\mathbb{Z}_{25}))) = 4$.
\end{enumerate}

\end{example}

\subsection{The Case Where $Z(R)$ is not an Ideal}
\begin{theorem}\label{T3}
	Suppose $R$ is a commutative ring such that $Z(R)$ is not an ideal of $R$. Let $n \geq 1$ be an integer. Then the $n-T(R)$ is connected if and only if there exists a path from $0$ to $1$ in the $n-T(R)$.
\end{theorem}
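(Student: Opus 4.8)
The plan is to prove the nontrivial implication by showing that a single path from $0$ to $1$ can be transported by multiplication to reach every vertex of $n-T(R)$. The forward implication is immediate: if $n-T(R)$ is connected then there is a path between any two vertices, in particular between $0$ and $1$. So I would assume we are given a path $0 = w_0 \sim w_1 \sim \cdots \sim w_m = 1$, and let $C_0$ denote the connected component of $0$; the goal is to prove $C_0 = R$. Since $R = Z(R) \cup Reg(R)$, it suffices to place every zero-divisor and every regular element in $C_0$.

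First I would dispose of the zero-divisors, and this uses none of the hypothesis on $Z(R)$. If $z \in Z(R)$ with $z \neq 0$, then $z^n \in Z(R)$ (if $zw = 0$ with $w \neq 0$, then $z^n w = 0$), so $0^n + z^n = z^n \in Z(R)$ and hence $0 \sim z$. Thus $Z(R) \subseteq C_0$. The heart of the argument is the regular case. For $x \in Reg(R)$ I would consider the multiplication map $\mu_x \colon R \to R$, $\mu_x(y) = xy$. The claim is that $\mu_x$ carries edges to edges: if $y \sim y'$, i.e.\ $y^n + (y')^n \in Z(R)$, then $(xy)^n + (xy')^n = x^n\bigl(y^n + (y')^n\bigr)$, and since $x \in Reg(R)$ forces $x^n \in Reg(R)$, the product of the regular element $x^n$ with the zero-divisor $y^n + (y')^n$ is again a zero-divisor; moreover $x$ regular makes $\mu_x$ injective, so adjacent vertices have distinct images and $xy \sim xy'$ is a genuine edge. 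Because $\mu_x$ fixes $0$ and sends $1$ to $x$, applying it to the chosen path yields a walk $0 = xw_0 \sim xw_1 \sim \cdots \sim xw_m = x$, whence $x \in C_0$. Together with the previous step this gives $C_0 = R$, so $n-T(R)$ is connected.

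The step I expect to be the main obstacle, and the one deserving care, is this regular case, for two reasons. First, one must verify the ring-theoretic fact that multiplying a zero-divisor by a regular element keeps it a zero-divisor, which is exactly what makes $\mu_x$ edge-preserving. Second, it is tempting to phrase everything through the automorphisms $y \mapsto uy$ for units $u$, but since $R$ need not be finite there may be regular elements that are not units, so $\mu_x$ need not be bijective; the point is that bijectivity is never needed — only that $\mu_x$ is a graph homomorphism fixing $0$, which already transports the $0$-to-$1$ path into a $0$-to-$x$ walk.

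As a sanity check that the hypothesis genuinely constrains the conclusion rather than forcing it, I would note the example $\mathbb{Z}_3 \times \mathbb{Z}_3$ with $n = 2$: here $Z(R)$ is not an ideal, every regular element has $n$-th power $(1,1)$ so is adjacent to nothing (each such element is isolated, including $1$ itself), there is no path from $0$ to $1$, and the graph is indeed disconnected. This confirms that the equivalence is meaningful and that the argument above must, and does, invoke the existence of the $0$-to-$1$ path.
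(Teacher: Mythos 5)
Your proof is correct and uses essentially the same idea as the paper: transport the given $0$--$1$ path by the multiplication map $y \mapsto vy$ to obtain a path from $0$ to any desired vertex $v$. Your version is in fact slightly more careful than the paper's (which multiplies by an arbitrary $v \in R^*$ without addressing possible collapsing of consecutive vertices), since you handle zero-divisors directly via $0 \sim z$ and invoke injectivity of $\mu_x$ only for regular $x$, where it actually holds.
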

\begin{proof}
	If the $n-T(R)$ is connected, then there is a path from $0$ to $1$. Conversely, suppose there exists a path from $0$ to $1$ in the  $n-T(R)$, say $0-w_1-w_2- \cdots  -w_q = 1$. Then for any $w_i- w_j$, $w_i^n+w_j^n \in Z(R)$. Let $v\in R^*$. Then $(vw_i)^n+(vw_j)^n=v^nw_i^n+v^nw_j^n = v^n(w_i^n+w_j^n)\in Z(R)$. Hence, $0- vw_1- \cdots - vw_q = v$ is a path from $0$ to $v$ for all $v$ in $R$, and therefore, there is a path between any two vertices in $R$. Therefore, the $n-T(R)$ is connected.
\end{proof}

The next result generalizes \cite[Theorem 3.1, (3)]{7} for odd integers.
\begin{theorem}\label{T4}
Let  $R$ be a commutative ring such that $Z(R)$ is not an ideal of $R$.	Let $n \geq 1$ be an odd integer. If the $n-T(Reg(R))$ is connected, then the  $n-T(R)$ is connected.
\end{theorem}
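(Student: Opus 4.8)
The plan is to reduce everything to the path criterion established in the preceding theorem: since $Z(R)$ is not an ideal, the $n-T(R)$ is connected if and only if there is a path from $0$ to $1$, so it suffices to exhibit one such path. First I would record the adjacencies of $0$. Since a product of regular elements is regular, one has $y^n \in Z(R)$ if and only if $y \in Z(R)$; hence in the $n-T(R)$ the vertex $0$ is adjacent to every nonzero zero-divisor and to nothing else. Consequently the whole problem collapses to finding a single edge joining some zero-divisor to the regular part, for then the connectivity of the $n-T(Reg(R))$ transports us to $1 \in Reg(R)$.

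To build that bridging edge I would use the hypothesis that $Z(R)$ is not an ideal. Because $Z(R)$ always absorbs multiplication (if $z \in Z(R)$ then $zr \in Z(R)$ for every $r$), the only way $Z(R)$ can fail to be an ideal is failure of closure under addition. Thus there exist $z_1, z_2 \in Z(R)$ with $u := z_1 + z_2 \in Reg(R)$; note $z_1 \neq 0$ (otherwise $u = z_2 \in Z(R)$) and $-u \in Reg(R)$. My claim is that $z_1$ is adjacent to the regular element $-u$, which is exactly the cross-edge needed to form the opening segment $0 - z_1 - (-u)$.

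The crux of the argument --- and the step that genuinely requires $n$ to be odd --- is checking $z_1^n + (-u)^n \in Z(R)$. For odd $n$ we have $(-u)^n = -u^n$, so the condition becomes $z_1^n - u^n \in Z(R)$. Expanding $u^n = (z_1 + z_2)^n$ by the binomial theorem and cancelling the leading $z_1^n$ term, every surviving term carries a factor of $z_2$, so $z_1^n - u^n = z_2 w$ for some $w \in R$; since $z_2 \in Z(R)$ and $Z(R)$ absorbs multiplication, this product lies in $Z(R)$, establishing the edge $z_1 \sim -u$. I expect this to be the only real obstacle, and it is precisely here that oddness is essential: for even $n$ the same expansion instead yields $2z_1^n$ plus a $Z(R)$-term, a sum of zero-divisors that need not be a zero-divisor, so the argument would break down.

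Finally I would assemble the path. We have $0 \sim z_1$ (as $0 \neq z_1 \in Z(R)$) and $z_1 \sim -u$ with $-u \in Reg(R)$; since the $n-T(Reg(R))$ is connected and $1 \in Reg(R)$, there is a path from $-u$ to $1$ running entirely through regular vertices. Concatenating $0 - z_1 - (-u) - \cdots - 1$ gives a path from $0$ to $1$ in the $n-T(R)$, and the path criterion of the preceding theorem then forces the $n-T(R)$ to be connected.
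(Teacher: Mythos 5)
Your proof is correct and follows essentially the same route as the paper: both arguments use the failure of $Z(R)$ to be closed under addition to produce a regular element $r$ that is a sum/difference of zero-divisors, then apply the binomial theorem (with oddness of $n$ supplying the sign $(-u)^n=-u^n$) to exhibit an edge between a zero-divisor and a regular vertex, and finally glue the $Z(R)$ part (all reachable through $0$) to the connected $n\text{-}T(Reg(R))$. The only cosmetic differences are your explicit appeal to the $0$-to-$1$ path criterion of the preceding theorem and a sign convention ($z_1\sim -u$ versus the paper's $z\sim r$ with $r=w-z$); the mathematics is the same.
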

\begin{proof}
 Suppose the $n-T(Reg(R))$ is connected. Since $Z(R)$ is not an ideal of $R$, there are $w, z\in Z(R)$ such that $w - z=r\in Reg(R)$. Therefore, we have $(w-z)^n=r^n = \sum^{n}_{k=1}  {n \choose
		k} (-1)^k w^k z^{n-k}= \sum^{n-1}_{k=1}  {n \choose
		k}(-1)^k w^k z^{n-k}-z^n=wq-z^n$ for some $q\in R$. Therefore, we have $r^n+z^n=wq\in Z(R)$. Since $0, z \in Z(R)$, every vertex of $Z(R)$ is connected to $z$. Since $z$ is connected to $r\in Reg(R)$ and the $n-T(Reg(R))$ is connected, we conclude the $n-T(R)$ is connected.
\end{proof}
\begin{example}
	The $3-T(\mathbb{Z}_6)$ is connected, as shown in figure 3.  Note that the $3-T(Reg(\mathbb{Z}_6))$ is connected, as shown in Figure 2.
\end{example}

\begin{figure}[!h]
	\centering
	\begin{tikzpicture}
		\tikzset{enclosed/.style={draw, circle, inner sep=0pt, minimum size=.15cm, fill=black}}
		
		\node[enclosed, label={above, yshift=0cm: 1}] (1) at (5,0) {};
		
		\node[enclosed, label={above, yshift=0cm: 3}] (3) at (4,1) {};
		
		\node[enclosed, label={above, yshift=0cm: 5}] (5) at (3,0) {};
		
		
		\draw (3) -- (1) node[midway, above] (edge1){};
		\draw  (1) -- (5) node[midway, above] (edge1){};
		\draw (5)--(3) node[midway, above] (edge1){};
	\end{tikzpicture}
	\caption{$3-T(Reg(\mathbb{Z}_6))$}
	\label{fig:enter-label}
\end{figure}
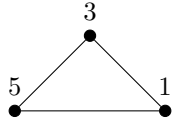
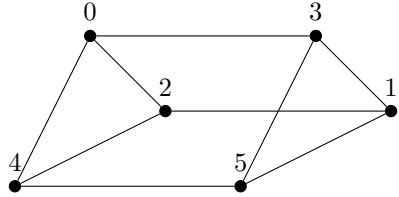
\begin{figure}[!h]
	\centering
	\begin{tikzpicture}
		\tikzset{enclosed/.style={draw, circle, inner sep=0pt, minimum size=.15cm, fill=black}}
		\node[enclosed, label={above, yshift=0cm: 0}] (0) at (1,2) {};
		\node[enclosed, label={above, yshift=0cm: 1}] (1) at (5,1) {};
		\node[enclosed, label={above, yshift=0cm: 2}] (2) at (2,1) {};
		\node[enclosed, label={above, yshift=0cm: 3}] (3) at (4,2) {};
		\node[enclosed, label={above, yshift=0cm: 4}] (4) at (0,0) {};
		\node[enclosed, label={above, yshift=0cm: 5}] (5) at (3,0) {};
		
		\draw (0) -- (3) node[midway, above] (edge1){};
		\draw (0) -- (2) node[midway, above] (edge1){};
		\draw (0) -- (4) node[midway, above] (edge1){};
		\draw (2) -- (4) node[midway, above] (edge1){};
		\draw (3) -- (1) node[midway, above] (edge1){};
		\draw  (1) -- (5) node[midway, above] (edge1){};
		\draw (5)--(3) node[midway, above] (edge1){};
		\draw (4)--(5) node[midway, above] (edge1){};
		\draw (1)--(2) node[midway, above] (edge1){};
	\end{tikzpicture}
	\caption{$3-T(\mathbb{Z}_6)$}
	\label{fig:enter-label}
\end{figure}

The next result generalizes \cite[Theorem 3.1, (3)]{7} for even integers with an extra condition.
\begin{theorem}\label{T5}
Let $n \geq2$ be an even integer, $R$ be a commutative ring such that $Z(R)$ is not an ideal of $R$ and suppose there is a $u \in R$ such that $u^n = -1$.	 If the $n-T(Reg(R))$ is connected, then the  $n-T(R)$ is connected.
\end{theorem}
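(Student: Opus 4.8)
The plan is to mirror the argument for the odd case in Theorem~\ref{T4}, with the extra hypothesis $u^n = -1$ supplying the sign correction that an odd exponent provided automatically. First I would exploit that $Z(R)$ is not an ideal: this yields elements $w, z \in Z(R)$ whose difference $r = w - z$ lies in $Reg(R)$, since the failure of $Z(R)$ to be closed under subtraction is exactly what produces such a pair. The entire goal is then to manufacture a single edge joining the regular element $r$ to some vertex of $Z(R)$; once that bridge is in place, connectivity of the whole graph will follow from the hypothesized connectivity of $n-T(Reg(R))$ together with the observation that $Z(R)$ is connected through the hub $0$.

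Next I would expand $r^n = (w - z)^n$ by the binomial theorem. Every term except the $w^0$-term carries a factor of $w$, so modulo multiples of $w$ we are left with the $w^0$-term $(-z)^n = (-1)^n z^n = z^n$, the last equality using that $n$ is even. Hence $r^n = z^n + wq$ for some $q \in R$, and since $w \in Z(R)$ forces $wq \in Z(R)$, we obtain $r^n - z^n \in Z(R)$. This is precisely where the even case diverges from the odd one: the expansion produces the difference $r^n - z^n$ rather than the sum $r^n + z^n$, and a difference of $n$-th powers is not directly an adjacency relation.

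The key step, and the point where the hypothesis $u^n = -1$ is essential, is to convert this difference into a genuine sum of $n$-th powers. Writing $-z^n = u^n z^n = (uz)^n$, we get $r^n + (uz)^n = r^n - z^n \in Z(R)$. Since $z \in Z(R)$ and zero-divisors absorb products (if $za = 0$ with $a \neq 0$ then $(uz)a = 0$), we have $uz \in Z(R)$, so $r \in Reg(R)$ is adjacent to the vertex $uz \in Z(R)$ in $n-T(R)$. I expect this conversion of a difference of powers into a sum of powers to be the only genuinely new ingredient relative to Theorem~\ref{T4}; it is also the main obstacle, in the sense that without the element $u$ there is no evident way to turn $r^n - z^n \in Z(R)$ into an edge.

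Finally I would assemble the pieces into a connectivity argument. Every $z' \in Z(R)$ satisfies $z'^n + 0^n = z'^n \in Z(R)$, so each element of $Z(R)$ is adjacent to $0$, and the induced subgraph on $Z(R)$ is connected. The edge $r$--$uz$ connects this $Z(R)$-component to a regular element, and by hypothesis $n-T(Reg(R))$ is connected, so $r$ is joined by a path to every other element of $Reg(R)$. Concatenating a path inside $Z(R)$, the bridge edge $r$--$uz$, and a path inside $Reg(R)$ then produces a path between any two vertices of $R$, whence $n-T(R)$ is connected.
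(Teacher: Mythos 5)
Your proposal is correct and follows essentially the same route as the paper's proof: both isolate the $w$-free term of a binomial expansion to get a difference of $n$-th powers in $Z(R)$, then use $u^n=-1$ to rewrite that difference as a sum $r^n+(uz)^n$ (the paper merely performs the substitution $v=uz$ before expanding rather than after), and both conclude by bridging the $Z(R)$-component, which is connected through $0$, to the connected graph $n-T(Reg(R))$. No gaps.
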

\begin{proof}
Suppose the $n-T(Reg(R))$ is connected. Since $Z(R)$ is not an ideal of $R$, there are $w, v\in Z(R)$ such that $w + v=r\in Reg(R)$. Let $z = u^{-1}v$. Hence, $w + uz = r \in Reg(R)$. Therefore, we have $(w+uz)^n=r^n =\sum^{n}_{k=1}
{n \choose
	k}
u^k w^k z^{n-k}= \sum^{n-1}_{k=1}  {n \choose
	k} u^k w^k z^{n-k}-z^n=wq-z^n$ for some $q\in R$. Therefore, we have $r^n+z^n=wq\in Z(R)$. Since $0, z \in Z(R)$, every vertex of $Z(R)$ is connected to $z$. Since $z$ is connected to $r\in Reg(R)$ and the $n-T(Reg(R))$ is connected, we conclude the $n-T(R)$ is connected.
	\end{proof}
	\begin{example}
The condition $ u^n =-1$ is sufficient but unnecessary for this to hold. Let $R=\mathbb{Z}_{14}$. The $2-T(Reg(R))$) is connected (and thus the $2-T(R)$ is connected). However, we do not have any element $u \in R$ such that $u^2=-1$.
\end{example}
The next result generalizes \cite[Theorem 3.3]{7} for odd integers.
\begin{theorem}\label{T6}
Let  $R$ be a commutative ring such that $Z(R)$ is not an ideal of $R$, and  $n\geq 1$ be an odd integer. The $n-T(R)$ is connected if and only if $R=(z_1,z_2,...,z_m)$ where $z_i\in Z(R)$ for each $1\leq i\leq m$.
\end{theorem}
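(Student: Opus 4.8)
The plan is to lean on the immediately preceding theorem, which says that $n\text{-}T(R)$ is connected if and only if there is a path from $0$ to $1$, and then to prove each implication by passing between such a path and a representation of $1$ as a combination of zero-divisors. Throughout I would use two elementary closure facts about $Z(R)$ even when it is not an ideal: it is closed under multiplication by arbitrary ring elements (so each $r_i z_i \in Z(R)$), and it is closed under negation (so $-z \in Z(R)$ whenever $z \in Z(R)$).

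For the forward implication, suppose $n\text{-}T(R)$ is connected and fix a path $0 = w_0 - w_1 - \cdots - w_q = 1$. Setting $e_i = w_i^n + w_{i+1}^n \in Z(R)$ for each edge, I would solve the recurrence $w_{i+1}^n = e_i - w_i^n$ starting from $w_0^n = 0$; a routine induction gives $w_j^n = \sum_{i=0}^{j-1}(-1)^{j-1-i} e_i$. Evaluating at $j = q$ and using $w_q^n = 1^n = 1$ yields $1 = \sum_{i=0}^{q-1}(-1)^{q-1-i} e_i$, an integer combination of the zero-divisors $e_0, \ldots, e_{q-1}$. Hence $R = (e_0, \ldots, e_{q-1})$ with every generator in $Z(R)$, as required. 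Note that this half uses nothing about the parity of $n$.

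For the converse I would first isolate the arithmetic fact that makes an odd exponent behave like the exponent $1$: if $x + y \in Z(R)$ and $n$ is odd, then $x^n + y^n \in Z(R)$. Setting $w = x + y \in Z(R)$, the binomial expansion of $x^n = (w - y)^n$ (exactly as in the proof of Theorem \ref{T4}) splits off the term $(-1)^n y^n = -y^n$, while every remaining term is divisible by $w$; this gives $x^n + y^n = wq \in Z(R)$ for some $q \in R$. The upshot is that every edge of the total graph $1\text{-}T(R)$ is also an edge of $n\text{-}T(R)$, so it suffices to exhibit a walk from $0$ to $1$ whose consecutive vertices sum to a zero-divisor.

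To produce that walk, assume $R = (z_1, \ldots, z_m)$ with $z_i \in Z(R)$ and write $1 = a_1 + \cdots + a_m$ with $a_i = r_i z_i \in Z(R)$. I would set $c_i = (-1)^{m-i} a_i \in Z(R)$, $v_0 = 0$, and $v_k = c_k - v_{k-1}$, so that $v_{k-1} + v_k = c_k \in Z(R)$ and hence $v_{k-1}^n + v_k^n \in Z(R)$ by the fact above, making each step an edge of $n\text{-}T(R)$; unwinding the recurrence gives $v_m = \sum_{i=1}^m (-1)^{m-i} c_i = \sum_{i=1}^m a_i = 1$. Trimming any repeated vertices leaves a genuine path from $0$ to $1$, so $n\text{-}T(R)$ is connected by the preceding theorem. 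I expect the converse to be the main obstacle: the forward direction is a clean telescoping, whereas the converse needs both the binomial reduction to total-graph adjacency (where oddness is essential, through $(-y)^n = -y^n$) and the precise choice of alternating signs $c_i = (-1)^{m-i} a_i$ so that the alternating partial sums terminate at exactly $1$ rather than at some signed variant of it.
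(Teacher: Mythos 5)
Your proof is correct and follows essentially the same route as the paper's: the forward direction is the same telescoping of the edge-sums $w_i^n+w_{i+1}^n$ into an integer combination equal to $1$, and the converse uses the same alternating partial sums together with the odd-exponent factorization to turn $v_{k-1}+v_k\in Z(R)$ into $v_{k-1}^n+v_k^n\in Z(R)$. The only cosmetic difference is that you build a path from $0$ to $1$ and invoke the preceding theorem, while the paper builds a path from $0$ to an arbitrary $x$ and concatenates through $0$.
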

\begin{proof}
Suppose the $n-T(R)$ is connected. Then there is a path $0- w_1- w_2-  \cdot - w_q- 1$ in $n-T(R)$. This implies that $w_1^n, w_1^n+w_2^n,..., w_{q-1}^n + w_q^n, w_q^n+1 \in Z(R)$. Hence, $1\in (w_1,w_1^n+w_2^n,..., w_{q-1}^n + w_q^n, w_q^n+1)\subseteq (Z(R)) $. Hence, $R=(Z(R))$

Conversely, suppose that $(Z(R))= R$. Then, we must show a path from $0$ to $x$ in $n-T(R)$ for any nonzero $x$ in $R$. By hypothesis, we have $x=v_1+ \cdots +v_q$ for some $v_1,...,v_q\in Z(R)$. Let $w_0=0$ and $w_j=(-1)^{q+j}(v_1+ \cdots +v_j)$ for each integer $1\leq j\leq q$. Since n is odd, we have $(-1)^{n(q+j)}=(-1)^{q+j}$ and hence we have $w_j^n=(-1)^{q+j}(v_1+ \cdots +v_j)^n = w_j^n+w_{j+1}^n=v_{j+1}y \in Z(R)$ for some $y\in R$. Thus, $0- w_1- w_2- \cdots - w_q=x$ is a path from $0$ to $x$ in $n-T(R)$. Let $0\neq a,b\in R$. By the previous argument, there are paths from $a$ to $0$ and from $0$ to $b$. Therefore, there is a path from $a$ to $b$ in $n-T(R)$; hence, $n-T(R)$ is connected.

\end{proof}
\begin{remark}\label{R1}
	In view of the proof of Theorem \ref{T6}, if the $n-T(R)$ is connected for some integer $n\geq 1$, then $R=(z_1,z_2,...,z_m)$, where $z_i\in Z(R)$ for each $1\leq i\leq m$.
	\end{remark}
The next result generalizes \cite[Theorem 3.3]{7} for even integers with an extra condition.
\begin{theorem}
	Suppose $R$ is a commutative ring such that $Z(R)$ is not an ideal of $R$. Let $n$ be an even integer, and suppose there is $u\in R$ such that $u^n=-1$. Then the $n-T(R)$ is connected if and only if $R=(z_1,z_2,...,z_m)$, where $z_i\in Z(R)$ for each $1\leq i\leq m$.
\end{theorem}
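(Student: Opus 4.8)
The plan is to follow the two-implication structure of Theorem~\ref{T6}, using the hypothesis $u^n=-1$ exactly where the oddness of $n$ was exploited in that proof. The forward implication needs neither the evenness of $n$ nor the element $u$: if the $n-T(R)$ is connected, then Remark~\ref{R1} immediately yields $R=(z_1,z_2,\ldots,z_m)$ with each $z_i\in Z(R)$. So that direction is essentially free.

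The work is in the converse. Assuming $R=(Z(R))$, I would fix a nonzero $x\in R$, write $x=v_1+\cdots+v_q$ with each $v_i\in Z(R)$, and set the partial sums $S_j=v_1+\cdots+v_j$, so that $S_q=x$. In the odd case one takes $w_j=(-1)^{q+j}S_j$ and relies on $(-1)^{n(q+j)}=(-1)^{q+j}$; for even $n$ this sign collapses to $1$, so I would instead define $w_0=0$ and $w_j=u^{\,q-j}S_j$ for $1\le j\le q$. Since $u^n=-1$, we get $w_j^n=(u^n)^{\,q-j}S_j^n=(-1)^{\,q-j}S_j^n$, and because the exponents $q-j$ and $q-(j+1)$ have opposite parity, $w_j^n+w_{j+1}^n=(-1)^{\,q-j}\bigl(S_j^n-S_{j+1}^n\bigr)$. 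Expanding $S_{j+1}^n=(S_j+v_{j+1})^n$ by the binomial theorem, every term of $S_{j+1}^n-S_j^n$ carries a factor $v_{j+1}\in Z(R)$, and since $Z(R)$ is closed under multiplication by arbitrary ring elements this difference lies in $Z(R)$; hence each $w_j$ is adjacent to $w_{j+1}$. The initial edge is handled by $w_1^n=(-1)^{\,q-1}v_1^n\in Z(R)$, and the exponent choice forces $w_q=u^0S_q=x$, so $0-w_1-\cdots-w_q=x$ is a path from $0$ to $x$. Routing any two vertices through $0$ then gives connectedness of the $n-T(R)$.

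The main obstacle is precisely the failure of the alternating-sign device for even $n$: the cancellation $S_j^n-S_{j+1}^n\in Z(R)$ requires consecutive vertices to contribute opposite signs after raising to the $n$th power, which $(-1)^{q+j}$ can no longer supply. The key idea is that $u^n=-1$ lets $u^{\,q-j}$ play the role of the sign inside the $n$th power, and choosing the exponent to be $q-j$ (rather than $q+j$) simultaneously keeps consecutive parities opposite and makes the final exponent $0$, so that the path terminates at $x$ itself rather than at a unit multiple of $x$.
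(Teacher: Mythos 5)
Your proof is correct and takes essentially the same approach as the paper: the forward direction via Remark \ref{R1}, and the converse by replacing the sign $(-1)^{q+j}$ of the odd case with a power of $u$, using $u^n=-1$ so that consecutive vertices in the path acquire opposite signs after raising to the $n$th power and the telescoping difference lands in $Z(R)$. Your exponent $q-j$ (the paper uses $q+j$) is a slight refinement, since it makes the path terminate exactly at $x$ rather than at a unit multiple of it.
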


\begin{proof}
Suppose the $n-T(R)$ is connected. By Remark \ref{R1}, $R=(z_1,z_2,...,z_m)$, where $z_i\in Z(R)$ for each $1\leq i\leq m$.
Suppose that $(Z(R))=R$. It suffices to show a path from $0$ to $1$ in the $n-T(R)$. By hypothesis, we have $1=v_1+ \cdots +v_q$ for some $v_1,...,v_q\in Z(R)$. Let $w_0=0$ and $w_j= u^{q+j}(v_1+ \cdots +v_j)$ for each integer $1\leq j\leq q$. Since $u^n = -1$, we have $(u)^{n(q+j)}=(-1)^{q+j}$ and hence we have $w_j^n=(-1)^{q+j}(v_1+ \cdots +v_j)^n = v_j^n + v_{j+1}^n= v_{j+1}y \in Z(R)$ for some $y\in R$. Thus, $0- w_1- w_2- \cdots - w_q = 1$ is a path from $0$ to $1$ in the $n-T(R)$. Hence, the $n-T(R)$ is connected.

\end{proof}
The next result generalizes \cite[Theorem 3.4]{7} for odd integers.
\begin{theorem}\label{T6}
Let  $R$ be a commutative ring such that $Z(R)$ is not an ideal of $R$, and $n$ be an odd integer. Suppose that $R=(Z(R))$ (i.e. the $n-T(R)$ is connected). Then \ $diam(n-T(R))=m$, where $m$ is the minimum number of zero divisors $z_1,...,z_m$ such that $(z_1,...z_m)=R$. Moreover, $diam(n-T(R))=d(0,1)$.
\end{theorem}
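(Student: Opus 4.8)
The plan is to prove $d(0,1)=m$ and $diam(n-T(R))=m$ simultaneously by a squeeze: I would establish the uniform upper bound $d(a,b)\le m$ for all $a,b\in R$ (which in particular gives $diam(n-T(R))\le m$ and $d(0,1)\le m$) together with the lower bound $m\le d(0,1)$. Since $d(0,1)\le diam(n-T(R))$ always holds, these combine to $m\le d(0,1)\le diam(n-T(R))\le m$, forcing $diam(n-T(R))=d(0,1)=m$, which is both assertions of the theorem.

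For the upper bound I would adapt the path construction used in the proof of the earlier odd-$n$ connectivity theorem (the one stating that $n-T(R)$ is connected iff $R=(z_1,\dots,z_m)$ with each $z_i\in Z(R)$), but anchored at $a$ rather than at $0$. Because $R=(z_1,\dots,z_m)$ with $z_i\in Z(R)$, every element of $R$ is a sum of $m$ zero-divisors: writing it as $\sum_{i=1}^m r_iz_i$, each $r_iz_i$ again lies in $Z(R)$. I would apply this to the particular element $(-1)^mb-a$, say $(-1)^mb-a=c_1+\cdots+c_m$ with $c_i\in Z(R)$, set $P_j=a+(c_1+\cdots+c_j)$ for $0\le j\le m$, and define $w_j=(-1)^jP_j$. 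Then $w_0=a$ and $w_m=(-1)^m\bigl(a+((-1)^mb-a)\bigr)=b$. Since $n$ is odd, $w_j^n+w_{j+1}^n=(-1)^j(P_j^n-P_{j+1}^n)$; and as $P_{j+1}=P_j+c_{j+1}$, the binomial expansion shows $P_{j+1}^n-P_j^n=\sum_{k=1}^n\binom{n}{k}P_j^{n-k}c_{j+1}^k$ is a multiple of the zero-divisor $c_{j+1}$, hence lies in $Z(R)$. Thus $w_0-w_1-\cdots-w_m$ is a walk of length $m$ from $a$ to $b$ (deleting any coincident consecutive vertices only shortens it), giving $d(a,b)\le m$.

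For the lower bound I would take a shortest path $0=u_0-u_1-\cdots-u_\ell=1$, so $\ell=d(0,1)$, and record the zero-divisors $a_i=u_{i-1}^n+u_i^n$ for $1\le i\le\ell$, where $a_1=u_1^n$ and $a_\ell=u_{\ell-1}^n+1$ since $u_0=0$ and $u_\ell=1$. A short telescoping computation yields $1=\sum_{i=1}^\ell(-1)^{\ell-i}a_i$, so $1\in(a_1,\dots,a_\ell)\subseteq(Z(R))$ and $R$ is generated by the $\ell$ zero-divisors $a_1,\dots,a_\ell$. Minimality of $m$ then forces $m\le\ell=d(0,1)$, closing the squeeze.

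The step I expect to be the main obstacle is pinning down the endpoints of the anchored walk. The naive target offset $b-a$ makes the construction land at $-a$ or $-b$ when $m$ is odd, off by a sign; the device that removes this parity trap is to expand the \emph{signed} offset $(-1)^mb-a$ as a sum of $m$ zero-divisors, which is exactly what makes $w_0=a$ and $w_m=b$ hold simultaneously for both parities of $m$ without lengthening the walk. The one genuinely computational point is checking that each edge condition $w_j^n+w_{j+1}^n\in Z(R)$ survives the odd-power expansion; this is where oddness of $n$ enters, through $(-1)^{jn}=(-1)^j$ and the telescoping of consecutive signed $n$-th powers (equivalently, through the adjacency $a\sim-a$ coming from $a^n+(-a)^n=0$).
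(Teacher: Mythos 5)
Your proof is correct and follows essentially the same route as the paper: the lower bound $m \le d(0,1)$ comes from the ideal generated by the edge sums $u_{i-1}^n+u_i^n$ exactly as in the paper, and the uniform upper bound $d(a,b)\le m$ comes from the same alternating-sign walk idea, built from writing a target element as a sum of $m$ zero-divisors using $R=(z_1,\dots,z_m)$. Your signed-offset bookkeeping (expanding $(-1)^m b-a=c_1+\cdots+c_m$ and setting $w_j=(-1)^jP_j$) is in fact a cleaner version of the paper's construction, whose printed choice of $z=x+y$ (for $m$ even) or $z=x-y$ (for $m$ odd) does not quite land $d_m$ on $y$ as written.
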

\begin{proof}
	First, we show that any path from $0$ to $1$ in $R$ has a length of at least $m$. Suppose that $0- b_1- \cdots - b_{k-1}- 1$ is a path from $0$ to $1$ in $n-T(R)$ of length $k$. Then $b_1^n, b_1^n+b_2^n,..., b_{k-1}^n+1\in Z(R)$ and hence $1\in (b_1^n, b_1^n+b_2^n,..., b_{k-1}^n+1)\subseteq (Z(R))$. Thus, $k\geq m$.
	Let $x$ and $y$ be distinct elements in $R$. We will show that there is a path in $R$ with a length less than or equal to $m$. Let $1=z_1+z_2+ \cdot +z_m$ for $z_1,...z_m\in Z(R)$. If $m$ is even, define $z=x+y$. If $m$ is odd, define $z=x-y$. In any case, let $d_0=x$. For each $1\leq k\leq m$ let $d_k=-(x+z(z_1+ \cdot +z_k))$ if $k$ is odd, and $d_k=x+z(z_1+ \cdot +z_k)$ if $k$ is even. Then $d_k^n+d_{k+1}^n=z_{k+1}q \in Z(R)$ for some $q\in R$. Also, $d_m=y$. Therefore, $x- d_1- \cdot - d_m-1- y$ is a path of length at most $m$. In particular, a shortest path between $0$ and $1$ would have length $m$. Therefore, $diam(n-T(R))=m$.
	
\end{proof}
The next result generalizes \cite[Theorem 3.4]{7} for even integers with an extra condition.

\begin{theorem}\label{T7}
Let  $R$ be a commutative ring such that $Z(R)$ is not an ideal of $R$.	and  $n$ be an even integer. Suppose that $R=(Z(R))$ and there exists $u\in R$ such that $u^n=-1$. Then $diam(n-T(R))=m$ where $m$ is the minimum number of zero divisors $z_1,...,z_m$ such that $(z_1,...z_m)=R$. Moreover, $diam(n-T(R))=d(0,1)$.
\end{theorem}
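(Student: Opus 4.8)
The plan is to prove the two inequalities $diam(n-T(R)) \geq m$ and $diam(n-T(R)) \leq m$ separately, following the template of the odd-integer diameter theorem proved just above, but replacing the alternating signs $(-1)^k$ by powers of the fixed element $u$. The lower bound will be completely insensitive to the parity of $n$, so only the explicit path construction in the upper bound needs the new hypothesis $u^n=-1$.

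First I would establish the lower bound, i.e.\ that every path from $0$ to $1$ has length at least $m$. Given a path $0-b_1-\cdots-b_{k-1}-1$ of length $k$, the edge relations force $b_1^n,\ b_1^n+b_2^n,\ \ldots,\ b_{k-1}^n+1$ all to lie in $Z(R)$, and telescoping these generators shows that $1$ lies in the ideal they generate. Hence $R$ is generated by $k$ zero-divisors, so $k\geq m$ by minimality of $m$. In particular $d(0,1)\geq m$, and therefore $diam(n-T(R))\geq m$. This step is verbatim the odd-case argument and uses nothing about the parity of $n$.

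For the upper bound I would exhibit, for any two distinct $x,y\in R$, a walk of length $m$ from $x$ to $y$. Write $1=z_1+\cdots+z_m$ with each $z_i\in Z(R)$ and $m$ minimal, and set $\sigma_k=z_1+\cdots+z_k$ so that $\sigma_0=0$ and $\sigma_m=1$. Observe first that $u^n=-1$ forces $u$ to be a unit, with inverse $-u^{n-1}$; this is precisely the extra leverage the hypothesis provides. Choosing $z=u^{-m}y-x$ and defining $d_k=u^k\bigl(x+z\sigma_k\bigr)$ for $0\leq k\leq m$, one checks $d_0=x$ and $d_m=u^m(x+z)=y$. The crucial computation is that $u^{kn}=(-1)^k$, so, writing $s_k=x+z\sigma_k$, we get $d_k^n=(-1)^k s_k^n$ and hence $d_k^n+d_{k+1}^n=(-1)^k\bigl(s_k^n-s_{k+1}^n\bigr)$. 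Since $s_{k+1}=s_k+z\,z_{k+1}$, a binomial expansion shows $s_{k+1}^n-s_k^n$ is a multiple of $z_{k+1}\in Z(R)$ and so lies in $Z(R)$; thus consecutive $d_k$ are adjacent and $x-d_1-\cdots-d_{m-1}-y$ is a walk of length $m$, giving $d(x,y)\leq m$. Combining with the lower bound yields $diam(n-T(R))=m$, and applying the upper bound to the pair $\{0,1\}$ together with $d(0,1)\geq m$ gives $d(0,1)=m=diam(n-T(R))$.

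The main obstacle is exactly the even-parity sign issue: with $n$ even the device $(-1)^k$ of the odd case no longer produces the required sign after raising to the $n$-th power, which is why $u^n=-1$ is assumed. The substitution $u^k$ for $(-1)^k$ makes $u^{kn}=(-1)^k$ do that job while simultaneously guaranteeing $u$ is a unit, so that the endpoint equation $d_m=y$ remains solvable for $z$. Once this replacement is in place, the binomial divisibility computation and the telescoping lower bound carry over unchanged from the odd case, so I expect no genuinely new difficulty beyond verifying these two points.
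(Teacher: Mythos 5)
Your proof is correct and follows essentially the same strategy as the paper: the identical telescoping lower bound showing any $0$--$1$ path of length $k$ yields $k$ zero-divisor generators of $R$, and an explicit length-$m$ walk from $x$ to $y$ built from the partial sums of $1=z_1+\cdots+z_m$, with powers of $u$ supplying the sign $(-1)^k$ after raising to the $n$-th power and the factorization $a^n-b^n=(a-b)(\cdots)$ giving adjacency. Your uniform choice $d_k=u^k(x+z\sigma_k)$ with $z=u^{-m}y-x$ is a slightly cleaner variant of the paper's parity-split definition (and it fixes the endpoint condition $d_m=y$, which the paper's stated choice of $z$ only achieves up to sign), but it is the same idea.
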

\begin{proof}
First, we show that any path from $0$ to $1$ in $R$ has a length of at least $m$. Suppose that $0- b_1-  \cdot - b_{k-1}- 1$ is a path from $0$ to $1$ in $n-T(R)$ of length $k$. Then $b_1^n, b_1^n+b_2^n,..., b_{k-1}^n+1\in Z(R)$ and hence $1\in (b_1^n, b_1^n+b_2^n,..., b_{k-1}^n+1)\subseteq (Z(R))$. Thus, $k\geq m$.
Let $x$ and $y$ be distinct elements in $R$. We will show that there is a path in $R$ with a length less than or equal to $m$. Let $1=z_1+z_2+ \cdot +z_m$ for $z_1,...z_m\in Z(R)$. If $m$ is even, define $z=x+y$. If $m$ is odd, define $z=x-y$. In any case, let $d_0=x$. For each $1\leq k\leq m$ let $d_k=u(x+z(z_1+ \cdot +z_k))$ if $k$ is odd and $d_k=x+z(z_1+ \cdot +z_k)$ if $k$ is even. Then $d_k^n+d_{k+1}^n=z_{k+1}q \in Z(R)$ for some $q\in R$. Also, $d_m=y$. Therefore, $x- d_1- \cdots - d_m-1- y$ is a path of length at most $m$. In particular, a shortest path between $0$ and $1$ would have length $m$. Therefore, $diam(n-T(R))=m$.

\end{proof}
The following results generalizes \cite[Corollary 3.5]{7}.
\begin{corollary}
Let  $R$ be a commutative ring such that $Z(R)$ is not an ideal of $R$.	if $diam(n-T(R))=m$, then $diam(n-T(Reg(R)))\geq m-2$
\end{corollary}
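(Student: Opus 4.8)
The plan is to reduce the statement to a distance estimate inside the full graph. The key observation is that $n\text{-}T(Reg(R))$ is the subgraph of $n\text{-}T(R)$ induced on $Reg(R)$, so any path between regular vertices that lives in $n\text{-}T(Reg(R))$ is also a path in $n\text{-}T(R)$; hence $d_{Reg}(x,y) \ge d(x,y)$ for all $x,y \in Reg(R)$, where $d$ and $d_{Reg}$ denote distance in $n\text{-}T(R)$ and in $n\text{-}T(Reg(R))$ respectively. Consequently it suffices to produce two regular elements whose distance in the full graph is at least $m-2$; the chain $diam(n\text{-}T(Reg(R))) \ge d_{Reg}(\cdot,\cdot) \ge d(\cdot,\cdot)$ then finishes the job, and the case where $n\text{-}T(Reg(R))$ is disconnected is immediate since then its diameter is $\infty$. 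Because $diam(n\text{-}T(R)) = m$ is finite, $n\text{-}T(R)$ is connected, and by the diameter formula established above (the odd case, and the even case in Theorem~\ref{T7}, both giving $diam(n\text{-}T(R)) = d(0,1)$) I may assume the diameter is realized by the pair $(0,1)$, i.e. $d(0,1) = m$.

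The main step is to locate a single regular element $r_0$ with $d(0, r_0) \le 2$. First I would record that the neighbours of $0$ in $n\text{-}T(R)$ are exactly the nonzero zero-divisors: $0$ is adjacent to $w$ iff $0^n + w^n = w^n \in Z(R)$, and $w^n \in Z(R)$ holds precisely when $w \in Z(R)$, since a power of a regular element is regular while a power of a zero-divisor is a zero-divisor. Now take a shortest path $0 = u_0 - u_1 - \cdots - u_m = 1$ in $n\text{-}T(R)$. Its vertices are distinct, $u_1 \in Z(R)$ because it is a neighbour of $0$, and $u_m = 1 \in Reg(R)$; hence there is a least index $s$ with $u_s \in Reg(R)$, and $s \ge 2$. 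The predecessor $u_{s-1}$ then lies in $Z(R)$ and differs from $u_0 = 0$, so it is a nonzero zero-divisor and is therefore adjacent to $0$. Thus $0 - u_{s-1} - u_s$ is a path of length $2$, and $r_0 := u_s$ is a regular element with $d(0, r_0) \le 2$.

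Finally, the triangle inequality gives $d(r_0, 1) \ge d(0,1) - d(0, r_0) \ge m - 2$, and since $r_0, 1 \in Reg(R)$ I conclude $diam(n\text{-}T(Reg(R))) \ge d_{Reg}(r_0, 1) \ge d(r_0, 1) \ge m-2$. If $r_0 = 1$, then $d(0,1) = d(0, r_0) \le 2$ forces $m \le 2$, so $m-2 \le 0$ and the bound is trivial; thus this degenerate case causes no trouble. I expect the only genuinely delicate point to be the justification that the diameter is attained at $(0,1)$: this is where the hypotheses on the parity of $n$ (and, in the even case, the existence of $u$ with $u^n = -1$) enter through the preceding diameter theorems, and without $d(0,1) = m$ the triangle-inequality estimate would run in the wrong direction. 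The remaining ingredients — the description of the neighbours of $0$ and the extraction of the first regular vertex along a geodesic — are routine.
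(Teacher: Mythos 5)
Your proposal is correct and follows essentially the same route as the paper: both take a geodesic from $0$ to $1$ realizing $diam(n\text{-}T(R))=d(0,1)=m$ (which, as you rightly flag, is where the hypotheses of the preceding diameter theorems enter), use the fact that $0$ is adjacent exactly to the nonzero zero-divisors to find a regular vertex within distance $2$ of $0$, and conclude that the remaining stretch to $1$ forces $diam(n\text{-}T(Reg(R)))\geq m-2$. The only cosmetic difference is that the paper shows all internal vertices from $b_2$ onward are regular and exhibits the tail $b_2-\cdots-1$ as a path inside $Reg(R)$, whereas you stop at the first regular vertex and finish with the triangle inequality plus the induced-subgraph inequality $d_{Reg}\geq d$; your handling of the degenerate case $r_0=1$ is a small extra point of care.
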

\begin{proof}
	Since $diam(n-T(R))=d(0,1)=m$, let $0- b_1- \cdots - b_{m-1}- 1$ be a path from $0$ to $1$ in $R$.  Clearly, $b_1\in Z(R)$. Suppose that for some $2\leq i\leq m-1$ we have $b_i$ in $Z(R)$. Therefore, we can have the path $0- b_i- \cdots - 1$ in $n-T(R)$ of length less than $m$. This is a contradiction. Thus, $b_i\in Reg(R)$ for each $2\leq i\leq m-1$ and hence $b_2- \cdots - 1$ is a shortest path between $b_2$ and $1$ in the $n-T(Reg(R))$ of length $m-2$. Therefore $diam(n-T(Reg(R)))\geq m-2$.
\end{proof}
\begin{example}
	Let $R=\mathbb{Z}_{21}$. Every $x\in Reg(R)$, $x$ is not connected to any other element in $R$. Hence, The $2-T(R)$ is not connected.
\end{example}
\begin{example}\label{e333}
	It is not necessary to have $x\in R$ such that $x^n =-1$ for $n-T(R)$ to be connected when $n$ is even. Let $R=\mathbb{Z}_{6}$. There is no $x \in R$ such that $x^2=-1$. However, the $2-T(R)$ is connected as shown in Figure 4. Note that $d(0, 1) = 2$, but $diam(2-T(R)) = 3 \not = d(0, 2)$.
\end{example}
\begin{figure}[!h]
	\centering
	\begin{tikzpicture}
		\tikzset{enclosed/.style={draw, circle, inner sep=0pt, minimum size=.15cm, fill=black}}
		\node[enclosed, label={above, yshift=0cm: 0}] (0) at (1,1) {};
		\node[enclosed, label={above, yshift=0cm: 1}] (1) at (5,0) {};
		\node[enclosed, label={above, yshift=0cm: 2}] (2) at (2,0) {};
		\node[enclosed, label={above, yshift=0cm: 3}] (3) at (4,1) {};
		\node[enclosed, label={above, yshift=0cm: 4}] (4) at (0,0) {};
		\node[enclosed, label={above, yshift=0cm: 5}] (5) at (3,0) {};
		
		\draw (0) -- (3) node[midway, above] (edge1){};
		\draw (0) -- (2) node[midway, above] (edge1){};
		\draw (0) -- (4) node[midway, above] (edge1){};
		\draw (2) -- (4) node[midway, above] (edge1){};
		\draw (3) -- (1) node[midway, above] (edge1){};
		\draw  (1) -- (5) node[midway, above] (edge1){};
		\draw (5)--(3) node[midway, above] (edge1){};
	\end{tikzpicture}
	\caption{$2-T(\mathbb{Z}_6)$}
	\label{fig:enter-label}
\end{figure}
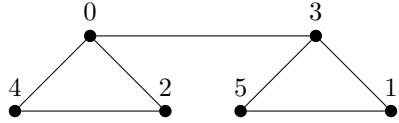

\begin{theorem}
	Let $R=R_1\times \cdot \times R_k$ for some $k \geq 2$ where $R_i$ is a commutative ring with $1\not = 0$ for each $1\leq i \leq k$.  If $n\geq 1$ is an odd integer, then the $n-T(R)$ is connected. Moreover, $diam(n-T(R))=2$.
\end{theorem}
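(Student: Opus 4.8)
The plan is to reduce the statement to the odd-$n$ results already established for the case where $Z(R)$ is not an ideal, and then to compute the single numerical invariant those results produce. First I would record the behaviour of zero-divisors in a product: writing $x=(x_1,\dots,x_k)$, one has $x\in Z(R)$ if and only if $x_i\in Z(R_i)$ (with $0$ counted as a zero-divisor) for at least one index $i$, and consequently $x^n+y^n\in Z(R)$ if and only if $x_i^n+y_i^n\in Z(R_i)$ for some $i$. Since $k\geq 2$, the elements $a=(1,0,\dots,0)$ and $b=(0,1,\dots,1)$ both lie in $Z(R)$ (each is nonzero and annihilates a nonzero element), while $a+b=1$. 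This shows simultaneously that $Z(R)$ is not closed under addition, hence is not an ideal, and that $1\in(a,b)\subseteq(Z(R))$, so $R=(Z(R))$. Thus the hypotheses of the odd-$n$ theorems for the non-ideal case are satisfied.

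Because $R=(Z(R))$, the connectivity criterion for odd $n$ gives that $n-T(R)$ is connected. To determine the diameter I would then invoke the odd-$n$ diameter theorem (Theorem~\ref{T6}), which asserts $diam(n-T(R))=m$, where $m$ is the least number of zero-divisors $z_1,\dots,z_m$ with $(z_1,\dots,z_m)=R$. It remains only to prove $m=2$. The upper bound $m\leq 2$ is exactly the pair $a,b$ above: since $a+b=1$ we get $1\in(a,b)$, so $(a,b)=R$. For the lower bound $m\geq 2$, observe that $m=1$ would require a single zero-divisor $z$ with $(z)=R$, i.e. $z\in U(R)$; but a unit is a non-zero-divisor, a contradiction. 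Hence $m=2$, and therefore $diam(n-T(R))=2$, as claimed. I expect this computation of $m$ to be the only real content of the argument, the delicate point being the remark that no single zero-divisor can generate $R$, which is what rules out $diam=1$ and forces the diameter to be exactly $2$.

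For robustness I would keep in mind a self-contained argument that bypasses the numbered theorems and directly bounds the diameter by $2$: given distinct $x,y\in R$, the vertex $z=(-x_1,-y_2,0,\dots,0)$ satisfies $x_1^n+z_1^n=x_1^n-x_1^n=0$ and $y_2^n+z_2^n=y_2^n-y_2^n=0$, where the oddness of $n$ is used so that $(-x_1)^n=-x_1^n$. By the adjacency characterization above, $z$ is then a common neighbour of both $x$ and $y$, giving a path $x-z-y$ of length $2$; that the diameter is not smaller follows since $0^n+1^n=1\notin Z(R)$, so $0\not\sim 1$. The main obstacle in this direct route is the small number of degenerate configurations in which the naive choice of $z$ coincides with $x$ or $y$; these are handled by relabelling or by using the remaining free coordinates (and, when $k=2$, by a minor perturbation of the construction), but they make the bookkeeping fussier than the reduction above. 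For that reason I would present the theorem-based reduction as the primary proof and treat the explicit common-neighbour construction as a sanity check.
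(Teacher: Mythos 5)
Your proposal is correct, but your primary argument takes a genuinely different route from the paper's; the paper's proof is essentially your ``sanity check.'' Given $x,y$, the paper exhibits the explicit common neighbour $r$ whose $i$-th coordinate is $-x_i$ and whose $j$-th coordinate is $-y_j$ for two distinct indices, uses oddness of $n$ to get $(-x_i)^n=-x_i^n$ so that $x^n+r^n$ and $r^n+y^n$ each have a zero coordinate, and concludes that $x-r-y$ is a path of length $2$. Your primary route instead checks that $Z(R)$ is not an ideal and that $R=(Z(R))$ via the pair $a=(1,0,\dots,0)$, $b=(0,1,\dots,1)$, and then invokes the odd-$n$ diameter theorem for the non-ideal case to reduce the whole statement to the computation $m=2$, with $m\geq 2$ because a single zero-divisor cannot generate $R$ (it would have to be a unit). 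The reduction buys two things the paper's direct argument glosses over: it delivers the lower bound $diam\geq 2$ automatically (the direct proof never rules out diameter $1$, though $0^n+1^n=1\notin Z(R)$ does so trivially), and it avoids the degenerate configurations in which the explicit $r$ coincides with $x$ or $y$ (possible, e.g., in $\mathbb{Z}_2\times\mathbb{Z}_2$), which the paper's ``always a path'' elides; the cost is reliance on the earlier, heavier diameter theorem. It is worth noting that the pair $a,b$ you use is exactly the generating set the paper itself deploys in the even-$n$ product theorem to get $diam=2$, so your argument is the natural uniform treatment of both parities.
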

\begin{proof}
		Suppose $x, y \in R$. Then $x=(x_1,...,x_k)$ and $y=(y_1,...,y_k)$. Since $n$ is odd, $x_i^n+(-x_i)^n=0$ (Similarly $y_j^n+(-y_j)^n=0$). Let $r=(r_1,..,-x_i,...,-y_j,...,r_k)$. Then clearly, $x^n+r^n$ is a zero divisor, and $r^n+y^n$ is a zero divisor. Hence, $x- r- y$ is always a path in the $n-T(R)$. Therefore, the $n-T(R)$ is connected and $diam(n-T(R))=2$.
\end{proof}
\begin{theorem}\label{T5}
			Let $n \geq 2$ be an even integer and $R = R_1\times \cdots \times R_k$ for some $k\geq 2$ where $R_i$ is a commutative ring with $1\not = 0$ for each $1\leq i \leq k$. Then the $n-T(R)$ is connected if and only if there is an $x$ in $R_j$ for some $1\leq j\leq k$ such that $x^n +1 \in Z(R_j)$. Furthermore, if there exists a $u \in R$ such that $u^n = -1 \in R$ (i.e., $u^n + 1 = 0 \in Z(R)$), then the $n-T(R)$ is connected and $diam(n-T(R)) = 2$.
\end{theorem}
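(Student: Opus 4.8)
The plan is to reduce everything to the coordinatewise description of zero-divisors in a product together with the earlier characterization of connectivity by a path from $0$ to $1$. I would first record the elementary fact that for $R = R_1 \times \cdots \times R_k$ an element $a = (a_1,\dots,a_k)$ lies in $Z(R)$ if and only if $a_i \in Z(R_i)$ for some $i$ (recall $0 \in Z(R_i)$); equivalently, $a \in Reg(R)$ iff every coordinate is regular. Hence for $x=(x_1,\dots,x_k)$ and $y=(y_1,\dots,y_k)$ the edge condition $x^n+y^n \in Z(R)$ holds iff $x_i^n + y_i^n \in Z(R_i)$ for some $i$. Since $k \geq 2$, the elements $(1,0,\dots,0)$ and $(0,1,\dots,1)$ both lie in $Z(R)$ while their sum is $1 \notin Z(R)$, so $Z(R)$ is not an ideal and the earlier criterion applies: $n-T(R)$ is connected if and only if there is a path from $0$ to $1$.

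For the forward implication, assume $n-T(R)$ is connected and take a path $0=w_0-w_1-\cdots-w_q=1$. Since $1$ is a unit, $0$ and $1$ are non-adjacent, so $q\ge 2$ and the last edge gives $w_{q-1}^n + 1 \in Z(R)$; by the coordinatewise criterion there is an index $j$ with $(w_{q-1})_j^n+1 \in Z(R_j)$, and $x=(w_{q-1})_j$ is the required element. For the reverse implication, given $x\in R_j$ with $x^n+1\in Z(R_j)$, I would exhibit the length-two path $0-v-1$, where $v$ has $j$-th coordinate $x$ and all other coordinates $0$. Indeed $v$ has a zero coordinate (using $k\ge 2$), so $v\in Z(R)$ and $0\sim v$, while the $j$-th coordinate of $v^n+1$ is $x^n+1\in Z(R_j)$, so $v\sim 1$; moreover $x\ne 0$ (otherwise $x^n+1=1\notin Z(R_j)$), so $0,v,1$ are distinct and we obtain a genuine path from $0$ to $1$, whence connectivity.

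For the furthermore part, if $u^n=-1$ then writing $u=(u_1,\dots,u_k)$ gives $u_i^n=-1$ in each $R_i$, so $u_j^n+1=0\in Z(R_j)$ and connectivity follows from the equivalence just established; the reverse construction above produces a path of length two from $0$ to $1$, so $d(0,1)=2$, and since $0\not\sim 1$ we also get $diam(n-T(R))\ge 2$. It then remains to bound every distance by $2$. Given distinct $a=(a_1,\dots,a_k)$ and $b=(b_1,\dots,b_k)$, I would set $w$ with $w_1=u_1a_1$, $w_2=u_2b_2$, and remaining coordinates $0$; then $a_1^n+w_1^n=a_1^n-a_1^n=0\in Z(R_1)$ and $b_2^n+w_2^n=0\in Z(R_2)$, so $a\sim w$ and $w\sim b$, giving $d(a,b)\le 2$ (and if $w$ equals $a$ or $b$ the same identities show $a\sim b$ directly). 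Hence $diam(n-T(R))=2$.

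The routine items are the coordinatewise zero-divisor description and the degenerate cases $w=a$ or $w=b$. The one genuinely structural point, and the reason $k\ge 2$ is indispensable, is that in the diameter argument the cancellation forcing $a\sim w$ and the cancellation forcing $w\sim b$ can be placed in two \emph{different} coordinates, so a single $w$ is simultaneously adjacent to $a$ and to $b$; I expect this separation-of-coordinates idea to be the crux, with everything else being bookkeeping.
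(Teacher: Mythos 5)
Your proof is correct, and the main equivalence (connected iff some $x \in R_j$ has $x^n+1 \in Z(R_j)$) follows exactly the paper's route: reduce to the existence of a path from $0$ to $1$ via the earlier criterion for rings in which $Z(R)$ is not an ideal, read the required element off the last edge of such a path in one direction, and exhibit the explicit length-two path $0-(0,\dots,x,\dots,0)-1$ in the other. Where you genuinely diverge is the ``furthermore'' clause. The paper gets $diam(n-T(R))=2$ by observing that $R=((1,0,\dots,0),(0,1,\dots,1))$ is generated by two zero-divisors and then invoking its Theorem \ref{T7} (the general diameter formula $diam(n-T(R))=m$ for even $n$ with $u^n=-1$), whereas you prove the bound $d(a,b)\le 2$ directly: for distinct $a,b$ you take $w=(u_1a_1,\,u_2b_2,\,0,\dots,0)$ so that the cancellation $a_1^n+u_1^na_1^n=0$ lands in the first coordinate and $b_2^n+u_2^nb_2^n=0$ lands in the second, and you correctly dispatch the degenerate cases $w=a$ or $w=b$. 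Your version is self-contained and more elementary — it does not depend on the machinery of Theorem \ref{T7} or on knowing that the minimal number of zero-divisor generators is exactly $2$ — while the paper's version is shorter on the page and reuses an already-proved general result. Both are valid; your separation-of-coordinates construction of $w$ is a clean replacement for the citation.
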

\begin{proof}
		Suppose the $n-T(R)$ is connected. Then there exists a path from $(0,...,0)$ to $(1,...,1)$ in the $n-T(R)$, say $(0, ... , 0)- \cdots -(x_1, ... , x_k)-(1, ... , 1)$. Hence  $x_i^n+1 \in Z(R_i)$ for some $1 \leq i \leq k$.

	Conversely, suppose there is an $x$ in $R_j$ for some $1\leq j\leq k$ such that $x^n +1 \in Z(R_j)$. Without the loss of generality, we may assume that $j = 1$. Thus, $(0, ... , 0) - (x, 0, ... , 0)-(1, ... , 1)$ is a path of length $2$ in the $n-T(R)$. Hence, the $n-T(R)$ is connected by Theorem \ref{T3}.
	
	Now, suppose that there exists a $u \in R$ such that $u^n = -1 \in R$. Since $R = ((1, 0, \cdots, 0), (0, 1, \cdots , 1))$, the $n-T(R)$ is connected and $diam(n-T(R)) = d((0, ... , 0), (1, ... , 1)) = 2$ by Theorem \ref{T7}.
\end{proof}

\begin{corollary}
	Let $n \geq 2$ be an even integer and $R = R_1\times \cdots \times R_k$ for some $k\geq 2$ where $R_i$ is an integral domain for each $1\leq i \leq k$. Then the $n-T(R)$ is connected if and only if there is an $x$ in $R_j$ for some $1\leq j\leq k$ such that $x^n = -1 \in R_j$ (i.e. $x^n + 1 = 0 \in Z(R_j))$. Furthermore, if the $n-T(R)$ is connected and there exists a $u \in U(R)$ such that $u^n = -1 \in R$ (i.e., $u^n + 1 = 0 \in Z(R)$), then $diam(n-T(R)) = 2$.
	\end{corollary}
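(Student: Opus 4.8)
The plan is to derive this as a direct specialization of Theorem~\ref{T5} (the product version for even $n$), using the single structural fact that an integral domain has trivial zero-divisor set. Since each $R_i$ is an integral domain, $Z(R_i) = \{0\}$, so the condition ``$x^n + 1 \in Z(R_j)$'' appearing in Theorem~\ref{T5} collapses to ``$x^n + 1 = 0$'', i.e. ``$x^n = -1$ in $R_j$''. With this substitution the first (biconditional) assertion of the corollary is literally the first assertion of Theorem~\ref{T5}, so nothing further is required for that part.

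For the ``furthermore'' clause, I would first observe that in a finite product of domains the hypothesis $u^n = -1$ automatically forces $u \in U(R)$: writing $u = (u_1, \ldots, u_k)$, the equation $u^n = -1$ means $u_i^n = -1 \neq 0$ in each domain $R_i$, whence $u_i \neq 0$ and $u_i \cdot (-u_i^{n-1}) = 1$, so every coordinate of $u$ is a unit and therefore $u \in U(R)$. Thus the phrasing ``$u \in U(R)$'' in the corollary is the natural form of the hypothesis and is no stronger than the hypothesis ``$u \in R$ with $u^n = -1$'' used in Theorem~\ref{T5}. Given such a $u$, the furthermore clause of Theorem~\ref{T5} applies verbatim and yields both that the $n-T(R)$ is connected and that $diam(n-T(R)) = 2$; in particular, the connectedness assumed in the corollary is already implied and can be treated as redundant.

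I do not expect any genuine obstacle here, since the corollary is a transcription of Theorem~\ref{T5} under the extra hypothesis that the factors are domains. The only point deserving care is to keep the two conditions distinct: connectedness requires merely that some single factor $R_j$ admit a solution of $x^n = -1$, whereas the diameter-two conclusion is obtained from the stronger global hypothesis that $-1$ is an $n$-th power in $R$ itself, i.e. that \emph{every} factor simultaneously admits such a solution. Flagging this distinction, rather than performing any new computation, is the substantive content of the write-up.
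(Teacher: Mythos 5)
Your proposal matches the paper's proof, which likewise derives the corollary directly from Theorem~\ref{T5} together with the observation that $Z(R_i)=\{0\}$ in each integral domain factor. Your additional remarks (that $u^n=-1$ forces $u\in U(R)$ in a product of domains, and the distinction between one factor versus all factors admitting a solution of $x^n=-1$) are correct elaborations but not a different method.
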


\begin{proof}
This follows directly from Theorem \ref{T5} and the fact that $R_i$ has no zero divisors except for $0$ for each $1 \leq i \leq k$.
\end{proof}
See Example \ref{e333}, $R = Z_{6}$ is ring-isomorphic to $A = Z_2\times Z_3$. Note that $d((0, 0), (1, 1)) = 2$ in the $2-T(A)$, but $diam(2-T(A))= 3$.
\begin{example}\label{E1}
$R=\mathbb{Z}_3 \times \mathbb{Z}_3$. There is no $x\in R$ such that $x^2=-1$, and the $2-T(R)$ is not connected. See figure 5. However, the $2-T(\mathbb{Z}_{14})$ is connected, and there is no $x \in \mathbb{Z}_{14}$ such that $x^2 = -1$.
\end{example}

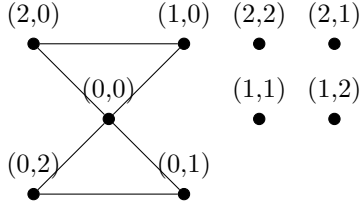
\begin{figure}[!h]
	
	\centering
	\begin{tikzpicture}
		\tikzset{enclosed/.style={draw, circle, inner sep=0pt, minimum size=.15cm, fill=black}}
		
		\node[enclosed, label={above, yshift=0cm: (0,0)}] (0) at (2,1) {};
		\node[enclosed, label={above, yshift=0cm: (0,1)}] (1) at (3,0) {};
		\node[enclosed, label={above, yshift=0cm: (0,2)}] (2) at (1,0) {};
		\node[enclosed, label={above, yshift=0cm: (1,0)}] (3) at (3,2) {};
		\node[enclosed, label={above, yshift=0cm: (2,0)}] (4) at (1,2) {};
		\node[enclosed, label={above, yshift=0cm: (2,2)}] (5) at (4,2) {};
		\node[enclosed, label={above, yshift=0cm: (2,1)}] (5) at (5,2) {};
		\node[enclosed, label={above, yshift=0cm: (1,1)}] (5) at (4,1) {};
		\node[enclosed, label={above, yshift=0cm: (1,2)}] (5) at (5,1) {};
		
		\draw (0) -- (1) node[midway, above] (edge1) {} ;
		\draw (0) -- (2) node[midway, above] (edge1) {} ;
		\draw (1) -- (2) node[midway, above] (edge1) {} ;
		\draw (0) -- (3) node[midway, above] (edge1) {} ;
		\draw (0) -- (4) node[midway, above] (edge1) {} ;
		\draw (3) -- (4) node[midway, above] (edge1) {} ;

	\end{tikzpicture}
	\caption{$2-T(\mathbb{Z}_3\times \mathbb{Z}_3)$}
	
\end{figure}
In view of Example \ref{E1}, we have the following result.
\begin{corollary}
	 Let $n$ be an even integer and $R=R_1\times \cdot \times R_k$ for some $k\geq 2$ where $R_i$ is an integral domain for each $1\leq i\leq k$. Then the $n-T(R)$ is connected if and only if there is an $x$ in $R_j$ for some $1\leq j\leq k$ such that $x^n=-1$ in $R_j$ (i.e., $x^n + 1 = 0 \in Z(R)$).
\end{corollary}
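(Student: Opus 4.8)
The plan is to obtain this statement as an immediate specialization of Theorem \ref{T5}. That theorem already characterizes connectedness of $n-T(R)$ for a product of arbitrary commutative rings by the existence of an index $j$ and an element $x \in R_j$ with $x^n + 1 \in Z(R_j)$. The only discrepancy between the two statements is that the condition $x^n + 1 \in Z(R_j)$ is here replaced by the cleaner equation $x^n = -1$ in $R_j$, and this is precisely the point at which the integral-domain hypothesis is used.

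First I would invoke Theorem \ref{T5} verbatim, noting that its hypotheses are met since every integral domain is in particular a commutative ring with $1 \neq 0$. This yields the equivalence: the $n-T(R)$ is connected if and only if there is some $1 \leq j \leq k$ and some $x \in R_j$ with $x^n + 1 \in Z(R_j)$. Next I would translate the membership $x^n + 1 \in Z(R_j)$ into an equation. Because $R_j$ is an integral domain, its set of zero-divisors is $Z(R_j) = \{0\}$. Hence $x^n + 1 \in Z(R_j)$ holds if and only if $x^n + 1 = 0$, that is, $x^n = -1$ in $R_j$. Substituting this equivalence into the characterization supplied by Theorem \ref{T5} produces exactly the stated biconditional.

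There is no genuine obstacle here; the entire content is the observation that an integral domain has no nonzero zero-divisors, so the zero-divisor condition of Theorem \ref{T5} collapses to the honest equation $x^n = -1$. The one point worth keeping in mind is that this collapse must be read inside the individual factor $R_j$, where $Z(R_j) = \{0\}$, and not in the full product $R$, whose zero-divisor set is typically much larger; the condition $x^n = -1$ is therefore a coordinatewise statement in some $R_j$, in agreement with how the corollary is phrased.

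\begin{proof}
This follows directly from Theorem \ref{T5} together with the fact that, for each $1 \leq i \leq k$, the integral domain $R_i$ has no zero-divisors other than $0$. Indeed, by Theorem \ref{T5} the $n-T(R)$ is connected if and only if there exist $1 \leq j \leq k$ and $x \in R_j$ with $x^n + 1 \in Z(R_j)$. Since $R_j$ is an integral domain, $Z(R_j) = \{0\}$, so $x^n + 1 \in Z(R_j)$ is equivalent to $x^n + 1 = 0$, that is, $x^n = -1$ in $R_j$. This is exactly the asserted condition.
\end{proof}
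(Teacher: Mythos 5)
Your proof is correct and follows exactly the same route as the paper: both invoke Theorem \ref{T5} and then observe that in an integral domain $Z(R_j)=\{0\}$, so the condition $x^n+1\in Z(R_j)$ collapses to $x^n=-1$ in $R_j$. You simply spell out this reduction more explicitly than the paper does.
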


\begin{proof}
	This follows directly from Theorem \ref{T5} and the fact that $R_i$ has no zero divisors except for $0$ for each $1\leq i\leq k$.
\end{proof}
\begin{example}
The $2-T(\mathbb{Z}_5\times\mathbb{Z}_3)$ is connected, since in $\mathbb{Z}_5$ we have $2^2=4=-1\in \mathbb{Z}_5$.
\end{example}

\begin{example}
The	$2-T(\mathbb{Z}_2\times\mathbb{Z}_2)$ is connected, since in $\mathbb{Z}_2$ we have $1^2=1=-1\in \mathbb{Z}_2$. This is shown in Figure 6.
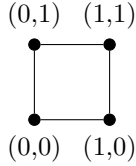
\begin{figure}[!h]
	\centering
	\begin{tikzpicture}
		\tikzset{enclosed/.style={draw, circle, inner sep=0pt, minimum size=.15cm, fill=black}}
		
		\node[enclosed, label={below, yshift=0cm: (0,0)}] (0) at (0,0) {};
		\node[enclosed, label={below, yshift=0cm: (1,0)}] (1) at (1,0) {};
		\node[enclosed, label={above, yshift=0cm: (0,1)}] (2) at (0,1) {};
		\node[enclosed, label={above, yshift=0cm: (1,1)}] (3) at (1,1) {};
		
		\draw (0) -- (1) node[midway, below] (edge1) {} ;
		\draw (0) -- (2) node[midway, below] (edge1) {} ;
		\draw (1) -- (3) node[midway, above] (edge1) {} ;
		\draw (2) -- (3) node[midway, above] (edge1) {} ;

	\end{tikzpicture}
	\caption{$2-T(\mathbb{Z}_2\times \mathbb{Z}_2)$}
\end{figure}
\end{example}
In the following result, we generalize \cite[Example 3.8]{7} but our proof differs entirely from that in \cite{7}
\begin{theorem}
	For any integer $m \geq 2$, there exists a ring $R$ such that for every integer $n\geq 1$, the  $n-T(R)$ is connected and $diam(n-T(R))=m$.
\end{theorem}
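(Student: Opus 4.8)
The plan is to exhibit one explicitly described ring for each $m$ and then read off the diameter from the results already proved for the case where $Z(R)$ is not an ideal. Note first that a nontrivial product $R_1\times\cdots\times R_k$ always satisfies $(1,0,\dots,0)+(0,1,\dots,1)=1$ with both summands zero-divisors, so its minimal number of zero-divisor generators is $2$ and it can only realize $m=2$; to force a larger diameter the ring must be indecomposable. I therefore work in characteristic $2$, so that $u=1$ satisfies $u^n=1=-1$ for every $n$ and Theorem~\ref{T7} becomes available for all even $n$ with no extra hypothesis, and take
\[
R_m \;=\; \mathbb{F}_2[x_1,\dots,x_{m-1}]\big/\big(\ell_1\ell_2\cdots\ell_m\big),\qquad \ell_j=x_j\ (1\le j\le m-1),\quad \ell_m=x_1+\cdots+x_{m-1}+1 .
\]
The defining polynomial is a product of $m$ distinct linear forms, hence $R_m$ is a reduced Noetherian ring whose minimal primes are exactly the images $\overline{(\ell_j)}$, and therefore $Z(R_m)=\bigcup_{j=1}^{m}\overline{(\ell_j)}$. (For $m=2$ this is just $\mathbb{F}_2[x_1]/(x_1(x_1+1))\cong\mathbb{F}_2\times\mathbb{F}_2$.)

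The first step is the arithmetic identity $\ell_1+\cdots+\ell_m=1$ in $R_m$, where the $2x_i$ terms cancel in characteristic $2$. Each $\overline{\ell_j}$ is a zero-divisor, since $\overline{\ell_j}\cdot\overline{\prod_{i\ne j}\ell_i}=\overline{\prod_i\ell_i}=0$ while $\overline{\prod_{i\ne j}\ell_i}\ne 0$. Thus $1\in(Z(R_m))$, which simultaneously shows $R_m=(Z(R_m))$ and shows $Z(R_m)$ is not an ideal, because an ideal consisting of zero-divisors cannot contain the unit $1$. The same identity gives the upper bound: the minimal number $\mu$ of zero-divisors generating $R_m$ satisfies $\mu\le m$.

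The heart of the argument, and the step I expect to be the main obstacle, is the matching lower bound $\mu\ge m$. For this I use the $m$ distinguished $\mathbb{F}_2$-rational points $p_1,\dots,p_m$, where $p_j$ has $j$-th coordinate $1$ and all others $0$ for $j<m$, and $p_m=0$. A direct check gives $\ell_i(p_k)=0$ for all $k\ne i$ and $\ell_i(p_i)\ne 0$, so $p_k$ lies on the hyperplane $V(\ell_i)$ for every $i\ne k$ and off $V(\ell_k)$. Now any zero-divisor $z$ lies in some minimal prime $\overline{(\ell_i)}$, i.e.\ vanishes on $V(\ell_i)$, whence $z(p_k)=0$ for every $k\ne i$; thus $z$ is nonzero at at most one of the points $p_1,\dots,p_m$. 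If $z_1,\dots,z_t$ are zero-divisors with $(z_1,\dots,z_t)=R_m$, then no maximal ideal contains them all, so for each $k$ some $z_l$ is nonzero at $p_k$; since each $z_l$ is nonzero at most at one $p_k$, we get $t\ge m$. Hence $\mu=m$. Finally, since $R_m=(Z(R_m))$, applying Theorem~\ref{T6} for odd $n$ and Theorem~\ref{T7} (with $u=1$) for even $n$ shows that $n-T(R_m)$ is connected and $diam(n-T(R_m))=\mu=m$ for every integer $n\ge 1$, as required.
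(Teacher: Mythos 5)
Your proposal is correct, and it takes a genuinely different route from the paper. The paper works in the ring $A/I$ with $A=\mathbb{Z}_2[X_1,\dots,X_{m-1},W_1,\dots,W_m]$, using the auxiliary variables $W_i$ as explicit witnesses that each of the linear forms $X_1,\dots,X_{m-1},X_1+\cdots+X_{m-1}+1$ is a zero-divisor; it then exhibits the path $0-x_1-(x_1+x_2)-\cdots-1$ and asserts it is shortest because no sum of the $x_i$'s is a zero-divisor, before invoking Theorems \ref{T6} and \ref{T7}. You instead take the leaner hypersurface ring $\mathbb{F}_2[x_1,\dots,x_{m-1}]/(\ell_1\cdots\ell_m)$, where the single relation $\ell_1\cdots\ell_m=0$ already forces each $\ell_j$ into a minimal prime, and you pin down the invariant $\mu$ (the minimal number of zero-divisor generators) directly: the identity $\ell_1+\cdots+\ell_m=1$ in characteristic $2$ gives $\mu\le m$, and the evaluation argument at the $m$ rational points $p_1,\dots,p_m$ (each zero-divisor lies in some $\overline{(\ell_i)}$ and hence survives at no more than one $p_k$, so a generating set of zero-divisors must have at least $m$ members by pigeonhole) gives $\mu\ge m$. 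What your approach buys is twofold: the example ring is smaller and more concrete (for $m=2$ it is literally $\mathbb{F}_2\times\mathbb{F}_2$), and the lower bound is proved as a statement about the ring-theoretic invariant $\mu$ rather than about one particular path, which is both more robust and closer to what Theorems \ref{T6} and \ref{T7} actually require; the paper's construction, in exchange, makes the zero-divisor property of each generator visible by a one-line multiplication rather than by an appeal to associated primes of a reduced Noetherian ring. Both arguments conclude identically by applying Theorem \ref{T6} for odd $n$ and Theorem \ref{T7} with $u=1$ (so $u^n=1=-1$ in characteristic $2$) for even $n$.
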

\begin{proof}
Let $n \geq 1$, $A=\mathbb{Z}_2[X_1,X_2,...,X_{m-1},W_1,W_2,...,W_m]$,

$I=(W_1X_1,...,W_{m-1}X_{m-1}, W_m(X_1+ \cdot +X_{m-1}+1),\{W_iW_j| 1\leq i<j\leq m\})$, and $R=A/I$. Denote $x_i=X_i+I\in R$ and $w_i=W_i+I\in R$. Then $(-1)^n=1$. We can construct the following path from $0$ to $1$ in the $n-T(R)$
	\begin{equation}
		0- x_1- (x_1+x_2)- \cdots - (x_1+x_2+ \cdot +x_{m-1})- 1
	\end{equation}
	Note that this is the shortest path from $0$ to $1$ of length $m$ in the $n-T(R)$ since any sum of the $x_i$'s is not a zero divisor in $R$, and thus we can not make a shorter path. Therefore, by Theorem \ref{T6} (In the case of odd $n$) and Theorem \ref{T7} (In the case of even $n$), $diam(n-T(R))=m$.
\end{proof}
{\bf Acknowledgment}

  The authors thank the referee for helpful comments.

\bibliographystyle{amsalpha}

\end{document}